\title{Coleman Maps for Modular Forms at Supersingular Primes over Lubin-Tate Extensions}
\author{Antonio Lei\footnote{Supported by Trinity College, Cambridge.}\\
\small Department of Pure Mathematics and\\
\small Mathematical Statistics, University of Cambridge\\
\small Wilberforce Road, Cambridge CB3 0WB, United Kingdom
}
\newtheorem{defn}{Definition}[section]
\newtheorem{thm}[defn]{Theorem}
\newtheorem{propn}[defn]{Proposition}
\newtheorem{cor}[defn]{Corollary}
\newtheorem{lem}[defn]{Lemma}
\newtheorem{rk}[defn]{Remark}
\newtheorem{conj}[defn]{Conjecture}
\newcommand{\vp}{\varphi}
\newcommand{\QQ}{\mathbb{Q}}
\newcommand{\ZZ}{\mathbb{Z}}
\newcommand{\Zp}{\mathbb{Z}_p}
\newcommand{\Gm}{\mathbb{G}_m}
\newcommand{\E}{\mathbb{E}_{h,V}}
\newcommand{\Qp}{\mathbb{Q}_p}
\newcommand{\DD}{\mathcal{D}}
\newcommand{\pp}{\mathfrak{p}}
\newcommand{\LL}{\mathcal{L}}
\newcommand{\HIw}{\mathbb{H}^1_{\mathrm{Iw}}}
\newcommand{\Hpm}{\mathbb{H}^1_{\mathrm{Iw},\pm}}
\newcommand{\col}{\mathrm{Col}^\pm}
\newcommand{\F}{\mathcal{F}}
\newcommand{\A}{\mathcal{A}}
\newcommand{\OF}{\mathcal{O}_F}
\newcommand{\OO}{\mathfrak{O}}
\newcommand{\z}{\mathbf{z}}
\newcommand{\eb}{\bar{\eta}}
\DeclareMathOperator{\Gal}{Gal}
\DeclareMathOperator{\Tw}{Tw}
\DeclareMathOperator{\Tr}{Tr}
\DeclareMathOperator{\Sel}{Sel}
\long\def\symbolfootnote[#1]#2{\begingroup
\def\thefootnote{\fnsymbol{footnote}}\footnote[#1]{#2}\endgroup}
\begin{document}
\maketitle
\begin{abstract}
Given an elliptic curve with supersingular reduction at an odd prime $p$, Iovita and Pollack have generalised results of Kobayashi to define even and odd Coleman maps at $p$ over Lubin-Tate extensions given by a formal group of height $1$. We generalise this construction to modular forms of higher weights. 
\end{abstract}
\symbolfootnote[0]{Email: aifl2@cam.ac.uk}
\symbolfootnote[0]{MSC 2000: 11R23; 11F11}
\symbolfootnote[0]{Keywords: Modular form; supersingular prime; Iwasawa theory; Lubin-Tate extension}
\section*{}
\setcounter{section}{-1}


\section{Introduction}
Let $f$ be a normalised eigen-newform of integral weight at least $2$ and $p$ an odd supersingular prime for $f$ (i.e. $p$ divides $a_p$ but not the level of $f$). On the one hand, the $p$-adic $L$-functions of $f$ defined in \cite{mtt} have unbounded coefficients. On the other hand, the $p$-Selmer group over the $\QQ_\infty$, the extension of $\QQ$ by adjoining all $p$ power roots of unity, is not $\Lambda$-cotorsion where $\Lambda$ is the Iwasawa algebra of $\Zp[[\Gal(\QQ_\infty/\QQ)]]$, which can be identified with the set of power series over $\Zp[{\rm Gal}(k_1/\Qp)]$. It makes the Iwasawa theory for $f$ at $p$ difficult. 

Much progress has been made in this direction. In \cite{po}, Pollack has defined the plus and minus analytic $p$-adic $L$-functions $L_p^\pm$, which have bounded coefficients in the case $a_p=0$. When $f$ corresponds to an elliptic curve $E$ defined over $\QQ$ and $p$ is as above, Kobayashi \cite{ko} defined the even and odd Selmer groups $\Sel_p^\pm(E/\QQ_\infty)$ by modifying the local condition of the usual Selmer group at $p$. These conditions are obtained by applying Honday theory to the formal group associated to $E$ at $p$. Kobayashi then used these conditions to construct
\begin{displaymath}
\col:\lim_{\leftarrow}H^1(k_n,T_E)\rightarrow\Lambda
\end{displaymath}
where $T_E$ is the Tate module of $E$ at $p$ and $k_n=\Qp(\mu_{p^n})$. It turns out that on applying $\col$ to the Kato zeta element defined in \cite{ka}, one obtains $L_p^\pm$, which can be used to show that $\Sel_p^\pm(E/\QQ_\infty)$ are $\Lambda$-cotorsion. It is then possible to formulate the ``main conjecture" in the following form:

\begin{conj}With the notation above, the characteristic ideal of the Pontryagin dual of $\Sel_p^\pm(E/\QQ_\infty)$ is generated by $L_p^\pm$.
\end{conj}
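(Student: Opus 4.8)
The plan is to follow the Euler system strategy of Kato, in the form adapted to the supersingular setting by Kobayashi, and to realise the equality of ideals as a two-sided divisibility: the ``analytic'' half will come from Kato's zeta element together with the signed Coleman maps $\col$, and the ``algebraic'' half from global duality together with an independent lower bound on the Selmer group. Write $\HIw := \varprojlim_n H^1(k_n, T_E)$, which by Kato is a free $\Lambda$-module of rank one. The signed Coleman maps fit into exact sequences $0 \to \Hpm \to \HIw \xrightarrow{\col} \Lambda \to 0$, in which $\Hpm$ is precisely the submodule cut out by the Honda-theoretic local condition at $p$ defining $\Sel_p^\pm(E/\QQ_\infty)$; establishing this surjectivity, and the resulting freeness of $\Hpm$, is the main structural output of Kobayashi's signed theory (reconstructed over the Lubin-Tate tower by Iovita and Pollack, and extended to higher weight in the present paper). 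Let $\z \in \HIw$ be Kato's zeta element. By the explicit reciprocity law of Kato and Perrin-Riou, together with the interpolation property of $\col$, one has $\col(\z) = L_p^\pm$ up to a unit of $\Lambda$, after normalising away the elementary Euler factors at the primes of bad reduction; hence $(L_p^\pm) = \col(\Lambda\,\z)$ and $\HIw/(\Hpm + \Lambda\,\z) \cong \Lambda/(L_p^\pm)$.

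For the first divisibility, one feeds this into the Poitou-Tate nine-term exact sequence over $\QQ_\infty$. Combined with Kato's bound on the Selmer group --- the assertion that $\HIw/\Lambda\,\z$ is $\Lambda$-torsion, with characteristic ideal dividing that of the dual of the strict Selmer group --- Poitou-Tate duality presents $X_\infty^\pm := \Sel_p^\pm(E/\QQ_\infty)^\vee$, up to pseudo-isomorphism and the vanishing of auxiliary error terms under the usual non-degeneracy hypotheses, as a quotient of $\HIw/(\Hpm + \Lambda\,\z) \cong \Lambda/(L_p^\pm)$. One concludes $\Char_\Lambda(X_\infty^\pm) \mid (L_p^\pm)$. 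This half of the argument is essentially formal once the signed Coleman maps and Kato's system are in hand.

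The reverse divisibility $(L_p^\pm) \mid \Char_\Lambda(X_\infty^\pm)$ is the genuine difficulty, and it cannot be extracted from Kato's Euler system. When $E$ has complex multiplication it follows by descending Rubin's two-variable main conjecture for elliptic units along the supersingular direction, in the manner of Pollack and Rubin. In general one invokes the main conjecture for $\mathrm{GL}_2/\QQ$ --- proved by Skinner and Urban by producing congruences with Eisenstein series on the unitary group $U(2,2)$, and carried over to the supersingular case by Wan --- and transports its conclusion through the $\pm$-decomposition of the local cohomology so that it controls the bounded function $L_p^\pm$ rather than the unbounded Mazur-Tate-Teitelbaum $L$-function. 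Combining the two divisibilities gives the asserted equality. I expect the main obstacle to be exactly this last step: the lower bound on the Selmer group demands machinery far heavier than anything entering the construction of $\col$, and the substantive work lies in arranging for that machinery to see the signed local condition rather than the classical $p$-adic $L$-function.
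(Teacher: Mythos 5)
The statement you have been given is Conjecture~1.1 of the paper: it is stated as a conjecture precisely because no proof is given (or, in this generality, known), and the body of the paper supplies only one ingredient that a proof would need, namely the construction of the signed Coleman maps. There is therefore no ``paper's proof'' for your argument to be compared with, and your text must stand on its own. As it stands it is not a proof but an outline of the standard strategy: both halves of the two-sided divisibility are outsourced --- the upper bound $\Char_\Lambda(X_\infty^\pm)\mid(L_p^\pm)$ to Kato's theorem on the zeta element together with Poitou--Tate duality, the lower bound to Rubin (in the CM case) or to Skinner--Urban and Wan --- and you yourself hedge the key steps (``up to pseudo-isomorphism and the vanishing of auxiliary error terms under the usual non-degeneracy hypotheses'', ``I expect the main obstacle to be exactly this last step''). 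An argument that defers both divisibilities to external theorems whose hypotheses are never checked, and that flags its own decisive step as an anticipated obstacle, does not prove the conjecture; at best it records why the statement is believed.

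There is also a concrete error in the structural part you do spell out: the sequence $0\to\Hpm\to\HIw\xrightarrow{\col}\Lambda\to0$ is not exact on the right in the $+$ case. The paper itself computes, at the end of Section~3, that ${\rm Im}(\mathrm{Col}^+)\cong(u-1)\Lambda+\sum_{\sigma\in G_1}\Lambda$, a proper submodule of $\Lambda$, while only $\mathrm{Col}^-$ is surjective. Consequently the identification $\HIw/(\Hpm+\Lambda\,\z)\cong\Lambda/(L_p^\pm)$ needs a correction in the even case, and the trivial-zero bookkeeping this forces is exactly the kind of detail a genuine proof of the $\pm$-main conjecture must absorb rather than elide. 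Finally, the Skinner--Urban/Wan input you invoke for the lower bound carries hypotheses (irreducibility and ramification conditions on the residual representation, conditions on the conductor) that are strictly stronger than ``$p$ odd and supersingular'', so even as a conditional argument your proposal does not reach the conjecture in the generality in which the paper states it.
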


On the one hand, the construction of $\col$ was generalised by Iovita and Pollack \cite{ip} to Lubin-Tate extensions given by formal groups of height $1$. That is, we can replace $k_n$ by extensions of $\Qp$ obtained by adjoining torsion points of a Lubin-Tate group of height $1$ defined over $\Zp$. On the other hand, Kobayashi's construction can be generalised to modular forms of higher weights by using Perrin-Riou's exponential map (see \cite{l}). We will show that one can generalise the construction of the former to higher weight modular forms as well by using the Perrin-Riou's exponential map constructed by Zhang \cite{zh2}.

As in \cite{l}, instead of defining the Coleman maps using local conditions obtained from the formal group, we define the Coleman maps directly using the Perrin-Riou's exponential. We then obtain our new local conditions from $\ker(\col)$, which turn out to agree with the ones given by Kobayashi and Iovita-Pollack. We then use these conditions to define the corresponding Selmer groups.

We now outline the construction of $\col$ here. Let $V_f$ be the Deligne $p$-adic representation of $G_\QQ$ associated to $f$. Write $V=V_f(1)$, the Tate twist of $V_f$ and fix $T$ a lattice in $V$ which is stable under $G_\QQ$. Then, the Perrin-Riou's exponential map enables us to define two elements
\begin{displaymath}
\E(\mu_{\xi^\pm})\in\mathcal{H}_{(k-1)/2}\otimes\lim_{\leftarrow}H^1(k_n,T)
\end{displaymath}
where $\mathcal{H}_{(k-1)/2}$ denotes the set of power series over $\Qp[{\rm Gal}(k_1/\Qp)]$ which are of order $\log_p^{(k-1)/2}$. We then define 
\begin{eqnarray*}
\LL_{\xi^\pm}:\lim_{\leftarrow}H^1(k_n,T^*(1))&\rightarrow&\mathcal{H}_{(k-1)/2}\\
\z&\mapsto&<\E(\mu_{\xi^\pm}),\z>
\end{eqnarray*}
where $<,>$ is a pairing on
\begin{displaymath}
\Big(\mathcal{H}_{(k-1)/2}\underset{\Lambda}{\otimes}\lim_{\leftarrow}H^1(k_n,T)\Big)\times\lim_{\leftarrow}H^1(k_n,T^*(1))\rightarrow\mathcal{H}_{(k-1)/2}.
\end{displaymath}
On computing some of its special values, we show that $\LL_{\xi^\pm}(\z)$ is divisible by $\log_{p,k}^\pm$, which is defined in \cite{po} and has exact order $\log_p^{(k-1)/2}$. This enables us to define
\begin{eqnarray*}
\col:\lim_{\leftarrow}H^1(k_n,T^*(1))&\rightarrow&\QQ\otimes\Lambda\\
\z&\mapsto&\LL_{\xi^\pm}(\z)/\log_{p,k}^\pm.
\end{eqnarray*}

The structure of this paper is as follows. We will review results of \cite{zh2} in Section~\ref{zhang}. In particular, we will state the properties of the Perrin-Riou's exponential map which we will need for our construction of the Coleman maps. In Section~\ref{construct}, we will construct the Coleman maps using ideas from \cite{l}. The kernels and images of these maps will be described in Section~\ref{kernel} under certain technical assumptions. In particular, we will define the even and odd Selmer groups for some $\Zp$-extensions of a number field using our description of the kernels. Finally, we explain how the construction in Section~\ref{construct} can be generalised to relative Lubin-Tate groups in Section \ref{relative} using ideas of Kim (see \cite{ki}).

\textbf{Acknowledgements.} The author would like to thank Tony Scholl, Byoung Du Kim and Alex Bartel for the very helpful discussions. He is also indebted to the anonymous referees for their valuable suggestions.


\section{Perrin-Riou's exponential map over height 1 Lubin-Tate extensions}\label{zhang}
In \cite{zh2}, Zhang has generalised the construction of Perrin-Riou's exponential map defined in \cite{pr} to Lubin-Tate extensions. We review his results here.

We fix an odd prime $p$ and $\pi$ a uniformiser of $\Zp$. Let $\alpha$ be the $p$-adic unit in $\Zp^\times$ such that $\pi=\alpha p$. Let $g$ be a lift of Frobenius with respect to $\pi$, i.e. a power series over $\Zp$ such that $g(X)=\pi X+$(higher terms) and $g(X)\equiv X^p\mod{p}$. Then, $g$ gives rise to an one-dimensional height-one formal group over $\Zp$, which is independent of the choice of $g$ up to isomorphism over $\Zp$. We denote this formal group by $\F$.

We write $K=\Qp$ (reason being we want to replace $\Qp$ by a finite unramified extension of $\Qp$ in Section~\ref{relative}), $K_n$ denotes the extension of $K$ obtained by adjoining the $\pi^n$th roots of $\F$ and $G_n$ denotes the Galois group of $K_n$ over $K$ for $0\le n\le\infty$. In particular, $G_n\cong(\ZZ/p^n)^\times$ and $G_\infty\cong G_1\times\Gal(K_\infty/K_1)\cong\ZZ/(p-1)\times\Zp$.

Let $\kappa$ be the character of $G_K$ (the absolute Galois group of $K$) given by its action on the Tate module of $\F$. Then, $\sigma\omega=[\kappa(\sigma)]_\F(\omega)$ for all $\omega\in\F[\pi^\infty]$. If $\chi$ denotes the cyclotomic character of $G_K$, then $\kappa=\chi\psi$ for an unramified character $\psi$.

Let $\Xi$ denote the completion of the maximal unramified extension of $\Qp$ and $\OO$ its ring of integers. Let $\eta:\Gm\rightarrow\F$ be an isomorphism between the multiplicative group and $\F$. Then $\eta\in\OO[[X]]$. Moreover, $\eta(X)=\Omega X+$ (higher degree terms), where $\Omega$ is a $p$-adic unit. The lift of Frobenius $g$ satisfies $g\circ\eta=\eta^\vp\circ((1+X)^p-1)$ where $\vp$ is the Frobenius of $\Gal(\Qp^{\rm ur}/\Qp)$ which acts on $\eta$ by acting on its coefficients. In particular, $\Omega^\vp=\alpha\Omega$.

\begin{defn}
We define $\Xi[[X]]^\psi$ to be the set of power series $f$, defined over $\Xi$, such that $\sigma f(X)=f((1+X)^{\psi(\sigma)}-1)\forall\sigma\in G_K$.
\end{defn}
In particular, \cite[(1.13)]{zh2} says that $\eta\in\Xi[[X]]^\psi$. The significance of this set is given by the following:
\begin{lem}\label{eta}
Let $f\in\Xi[[X]]^\psi$ and $\zeta$ a $p^n$th root of unity. Then $f(\zeta-1)\in K_n$.
\end{lem}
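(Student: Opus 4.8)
\emph{Proof idea.} The plan is to avoid any explicit description of $f(\zeta-1)$ and instead pin it down by Galois invariance. Concretely, I would show that $f(\zeta-1)$ — which makes sense in $\mathbb{C}_p$ because $\zeta-1$ lies in the maximal ideal — is fixed by every $\sigma\in\Gal(\overline{\Qp}/K_n)$, and then conclude by the Ax--Sen--Tate theorem: since $K_n/\Qp$ is finite it is complete, so $\mathbb{C}_p^{\Gal(\overline{\Qp}/K_n)}=K_n$.

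The computational heart is a short manipulation. Fix $\sigma\in G_K$ and extend it to a continuous automorphism of $\mathbb{C}_p$; writing $\sigma f$ for the power series obtained by letting $\sigma$ act on the coefficients of $f$, one has $\sigma\bigl(f(\zeta-1)\bigr)=(\sigma f)(\sigma\zeta-1)$. Substituting $X=\sigma\zeta-1$ into the defining identity $\sigma f(X)=f\bigl((1+X)^{\psi(\sigma)}-1\bigr)$ of $\Xi[[X]]^\psi$ (legitimate, since $\sigma\zeta-1$ lies in the disc of convergence), and noting that $(1+X)^{\psi(\sigma)}-1$ evaluated at $X=\sigma\zeta-1$ equals $(\sigma\zeta)^{\psi(\sigma)}-1$, one obtains, using $\sigma\zeta=\zeta^{\chi(\sigma)}$ together with $\kappa=\chi\psi$,
\[
\sigma\bigl(f(\zeta-1)\bigr)=f\bigl((\sigma\zeta)^{\psi(\sigma)}-1\bigr)=f\bigl(\zeta^{\chi(\sigma)\psi(\sigma)}-1\bigr)=f\bigl(\zeta^{\kappa(\sigma)}-1\bigr).
\]

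It remains to specialise to $\sigma\in\Gal(\overline{\Qp}/K_n)$. By construction $K_n=\Qp(\F[\pi^n])$, and $\sigma$ acts on $\F[\pi^\infty]$ by $\omega\mapsto[\kappa(\sigma)]_\F(\omega)$, so $\sigma$ fixing $\F[\pi^n]$ forces $\kappa(\sigma)\equiv 1\pmod{\pi^n}$; since $\pi=\alpha p$ with $\alpha\in\Zp^\times$ this says precisely $\kappa(\sigma)\in 1+p^n\Zp$, whence $\zeta^{\kappa(\sigma)}=\zeta$ because $\zeta^{p^n}=1$. Substituting into the displayed equality yields $\sigma\bigl(f(\zeta-1)\bigr)=f(\zeta-1)$ for every such $\sigma$, and we are done by the first paragraph.

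None of this is deep; the points to watch are purely bookkeeping. One is that the substitution into the power-series identity is licit — both sides converge on the disc $v_p(\,\cdot\,)>0$, and $(\sigma\zeta)^{\psi(\sigma)}$ is formed via the binomial series since $\psi(\sigma)\in\Zp^\times$ — and the other is the harmless passage between congruences modulo $\pi^n$ and modulo $p^n$, which is exactly where $\pi=\alpha p$ enters. If one prefers to sidestep $\mathbb{C}_p$, the identical Galois computation can be run inside the completion of $\Qp^{\mathrm{ur}}(\mu_{p^n})$, but appealing to Ax--Sen--Tate keeps the argument shortest.
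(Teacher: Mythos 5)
Your proposal is correct and follows essentially the same route as the paper: the identical Galois computation $\sigma(f(\zeta-1))=(\sigma f)(\zeta^\sigma-1)=f(\zeta^{\kappa(\sigma)}-1)$, followed by the observation that $\kappa(\sigma)\in 1+p^n\Zp$ for $\sigma$ fixing $K_n$. The only difference is your explicit appeal to Ax--Sen--Tate to pass from Galois invariance to membership in $K_n$, a point the paper leaves implicit.
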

\begin{proof}
By definition, $\sigma f(X)=f((1+X)^{\psi(\sigma)}-1)$ for any $\sigma\in G_K$. Therefore, we have
\begin{eqnarray*}
\sigma(f(\zeta-1))&=&(\sigma f)(\zeta^\sigma-1)\\
&=&f(\zeta^{\chi(\sigma)\psi(\sigma)}-1)\\
&=&f(\zeta^{\kappa(\sigma)}-1).
\end{eqnarray*}
If, in addition, $\sigma\in G_{K_n}$, then $\kappa(\sigma)\in1+p^n\Zp$. Hence, $\sigma(f(\zeta-1))=f(\zeta-1)$ for any $\sigma\in G_{K_n}$, so we are done.
\end{proof}
From now on, we fix a primitive $p^n$th root of unity $\zeta_{p^n}$ for each positive integer $n$ such that $\zeta_{p^{n+1}}^p=\zeta_{p^n}$. This determines an element $t\in B_{dR}^+$ (see \cite[Section~III.1]{co} for details). We also fix a crystalline (hence de Rham) representation $V$ of $G_K$ and write $D(V)=D_{dR}(V)=D_{\rm cris}(V)$ for its Dieudonn\'{e} module which is equipped with a de Rham filtration and an action of $\vp$. We denote the $i$th de Rham filtration by $D^i(V)$. We write $r(V)$ for the slope of $\vp$ on $D(V)$. Note that the action of $\vp$ extends to $\Xi\otimes D(V)$ naturally.

We write $V(k)$ for the $k$th Tate twist of $V$. Then, $D(V(k))=t^{-k}D(V)$ as $G_K$ acts on $t$ via $\chi$. Similarly, $D(V(\kappa^k))=t_\pi^{-k}D(V)$ where $t_\pi=\Omega t$ since $G_K$ acts on $t_\pi$ via $\kappa$ by \cite[Section 2]{zh2}. Their filtrations are given by the following:

\begin{lem}
The de Rham filtrations satisfy 
\[D^i(V(\kappa^j))=D^i(V(j))=t_\pi^{-j}D^{i+j}(V).\]
\end{lem}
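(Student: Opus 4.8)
The plan is to refine the identities $D(V(j))=t^{-j}D(V)$ and $D(V(\kappa^j))=t_\pi^{-j}D(V)$, recalled just above, to the level of de Rham filtrations, using only the two elementary facts that multiplication by $t$ raises the filtration degree on $B_{dR}\otimes V$ by one, while multiplication by the $p$-adic unit $\Omega$ (which lies in $\mathrm{Fil}^0 B_{dR}$ but not in $\mathrm{Fil}^1 B_{dR}$) leaves it unchanged. Throughout I would use that $D^i(W)=D(W)\cap\mathrm{Fil}^i(B_{dR}\otimes W)=(\mathrm{Fil}^i B_{dR}\otimes W)^{G_K}$.

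The first step is to prove $D^i(V(\kappa^j))=t_\pi^{-j}D^{i+j}(V)$. Under the canonical identification $B_{dR}\otimes V(\kappa^j)\cong B_{dR}\otimes V$ underlying $D(V(\kappa^j))=t_\pi^{-j}D(V)$, the $\Qp$-linear bijection $d\mapsto t_\pi^{-j}d$ from $D(V)$ onto $D(V(\kappa^j))$ carries the degree-$m$ filtration step into the degree-$(m-j)$ step, since $t_\pi^{-j}=\Omega^{-j}t^{-j}$ with $\Omega^{-j}$ of filtration degree $0$ and $t^{-j}$ of filtration degree $-j$; conversely an element $t_\pi^{-j}d$ of $D(V(\kappa^j))$ lies in $\mathrm{Fil}^i$ precisely when $d=t_\pi^{j}\cdot(t_\pi^{-j}d)$ lies in $\mathrm{Fil}^{i+j}$, i.e. when $d\in D^{i+j}(V)$. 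Hence $\mathrm{Fil}^iD(V(\kappa^j))=t_\pi^{-j}D^{i+j}(V)$. Running the same argument with $\Omega$ replaced by $1$, that is with $t_\pi$ replaced by $t$, yields $D^i(V(j))=t^{-j}D^{i+j}(V)$.

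It remains to identify $D^i(V(\kappa^j))$ with $D^i(V(j))$. Since $\kappa=\chi\psi$ with $\psi$ unramified, $V(\kappa^j)\cong V(j)\otimes\Qp(\psi^j)$, and an unramified character is crystalline with all Hodge--Tate weights zero, so $D(\Qp(\psi^j))$ is a line concentrated in filtration degree $0$; tensoring by it therefore changes neither $D(\,\cdot\,)$ as a filtered module nor the pieces $D^i(\,\cdot\,)$, under the canonical identification. Equivalently, this can be read off the two identities just obtained: $t_\pi^{-j}D^{i+j}(V)$ and $t^{-j}D^{i+j}(V)$ differ exactly by the unit $\Omega^{-j}$, which is precisely the discrepancy between $\Qp(\kappa^j)$ and $\Qp(j)$ and is absorbed by the identification fixed in \cite{zh2}. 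Combining the two steps gives $D^i(V(\kappa^j))=D^i(V(j))=t_\pi^{-j}D^{i+j}(V)$.

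The one step where a little care is needed is this last comparison of the $\kappa$- and $\chi$-twists, and there the only real issue is bookkeeping of coefficient rings, since $\Omega$ and the $\kappa$-twist naturally live over $\OO$ (resp. $\Xi$) rather than $\Qp$. The cleanest remedy is to carry out the whole computation inside $\Xi\otimes_{\Qp}D(V)$, to which $\vp$ extends naturally as recalled in the excerpt and the filtration extends by base change, and only then to restrict attention to the canonical $\Qp$-structure; rescaling a filtered line by a unit of filtration degree $0$ changes neither its jumps nor the submodule one recovers, so the lemma follows.
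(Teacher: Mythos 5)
Your proof is correct and follows essentially the same route as the paper: both arguments come down to writing $t_\pi^{-j}=\Omega^{-j}t^{-j}$ and using that $\Omega$ is a unit of filtration degree zero, so that twisting by $t_\pi^{-j}$ (or $t^{-j}$) shifts the de Rham filtration by $j$. Your additional discussion of the unramified twist $\Qp(\psi^j)$ and of the coefficient-ring bookkeeping makes explicit a point the paper leaves implicit (its displayed computation only treats $D^i(V(\kappa^j))$, with the comparison to $D^i(V(j))$ understood), but the underlying argument is the same.
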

\begin{proof}
Since $\Omega\in\bar{K}^\times$, we have
\begin{eqnarray*}
D^i(V(\kappa^j))&=&(t_\pi^{-j}D(V))\cap t^i B_{dR}^+\\
&=&t_\pi^{-j}(D(V)\cap t^{i+j}\Omega^j B_{dR}^+)\\
&=&t_\pi^{-j}(D(V)\cap t^{i+j} B_{dR}^+)\\
&=&t_\pi^{-j}D^{i+j}(V).
\end{eqnarray*}
Hence we are done.
\end{proof}

For $r\in\mathbb{R}_{\ge0}$, let $B$ be a Banach $p$-adic space, then $\DD_r(\Qp,B)$ denotes the set of tempered $B$-valued distributions of order $r$ (in the sense of \cite[Definition I.4.2]{co}) on the locally analytic functions with compact support in $\Qp$. It is equipped with a Galois action of $G_K$ as defined in \cite[(3.1)]{zh2}. Similarly, if $A$ is a compact open subset of $\Qp$, $\DD_r(A,B)$ denotes the set of tempered distributions of order $r$ on $A$ with values in $B$.

When $A=\Zp$, we write the Amice transform of $\mu\in\DD_r(\Zp,B)$ as $\A_\mu\in B[[X]]$, i.e.
\[
\A_\mu(X)=\int_{\Zp}(1+X)^x\mu(x).
\]

We define $\DD_r(\Qp,\Xi\otimes D(V))^\psi$ to be the subset of $\DD_r(\Qp,\Xi\otimes D(V))$ consisting of all the distributions $\mu$ satisfying:
\begin{displaymath}
\sigma\left(\int_{\Qp} f\mu\right)=\int_{\Qp} f(\psi(\sigma)x)\mu\ \forall\sigma\in G_K.
\end{displaymath}
\begin{rk}\label{amice}Let $\mu\in\DD_r(\Zp,\Xi\otimes D(V))$. Then, $\mu\in\DD_r(\Zp,\Xi\otimes D(V))^\psi$ iff its Amice transform is in $\Xi[[X]]^\psi\otimes D(V)$ (see \cite[Proposition 2.4(i)]{zh2}).\end{rk}

We define $\widetilde{\DD_r}(\Zp^\times,\Xi\otimes \DD(V))$ to be $\displaystyle\lim_{\underset{\Tw}{\leftarrow}}\DD_r\left(\Zp^\times,\Xi\otimes D(V(\kappa^k))\right)$ where $\Tw$ is the twist map given by $\mu\mapsto(-tx)^{-1}\mu$, which is well defined by \cite[Lemma~3.6]{zh1}. We define $\widetilde{\DD_r}(\Qp,\Xi\otimes \DD(V))$ similarly. In \cite[Theorems 3.3 and 3.6]{zh2}, the generalised Perrin-Riou's exponential is given by:

\begin{thm}\label{przh}
Let $h$ be a positive integer such that $D^{-h}(V)=D(V)$. Then, there is a map 
\begin{displaymath}\E:\widetilde{\DD_r}(\Qp,\Xi\otimes D(V))^{\vp_\DD\otimes\vp=1,\psi}\rightarrow H^1\left(K_\infty,\DD_{r+r(V)+h}(\Zp^\times,D(V))\right)^{G_\infty}\end{displaymath}
such that for $k\ge1-h$
\begin{eqnarray*}
\int_{\Zp^\times}x^k\E(\mu)&=&(k+h-1)!\exp_k\left((1-\vp)^{-1}(1-\frac{\vp^{-1}}{p})\int_{\Zp^\times}\frac{\mu}{(-tx)^k}\right),\\
\int_{1+p^n\Zp}x^k\E(\mu)&=&(k+h-1)!\exp_k\left(\frac{\vp^{-n}}{p^n}\int_{\Zp}\epsilon\left(\frac{x}{p^n}\right)\frac{\mu}{(-tx)^k}\right)
\end{eqnarray*}
where $\epsilon$ is as defined in \cite[Section V.1]{co} and $\exp_k$ denotes the exponential map for the $p$-adic representation $V(\kappa^k)$ as defined in \cite{bk}.
\end{thm}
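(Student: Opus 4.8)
The plan is to reduce the statement to the classical Perrin-Riou big exponential over the cyclotomic tower, proved in \cite{pr} and reformulated in distribution-theoretic language by Colmez \cite{co}, and then to descend along the unramified twist relating $\F$ to $\Gm$. The isomorphism $\eta:\Gm\rightarrow\F$ over $\OO$ intertwines the Lubin-Tate character $\kappa$ with the cyclotomic character $\chi$ up to the unramified character $\psi$, and, via $g\circ\eta=\eta^\vp\circ((1+X)^p-1)$, it also intertwines the Frobenius of $\F$ with the cyclotomic one at the cost of the period $\Omega$. Base-changing everything to $\Xi$ therefore identifies the Lubin-Tate tower $(K_n)_n$ with the cyclotomic tower after twisting by $\psi$, and identifies $D(V(\kappa^k))=t_\pi^{-k}D(V)$ with $t^{-k}D(V)$ by multiplication by $\Omega^k$; this is exactly the ambiguity recorded in the lemma on de Rham filtrations above, and it is what makes the eventual object descend to $\Qp$.

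First I would construct the map. Given $\mu$ in the source, Remark~\ref{amice} places its Amice transform in $\Xi[[X]]^\psi\otimes D(V)$, and the hypotheses $\vp_\DD\otimes\vp=1$ and the $\psi$-equivariance are precisely what is needed to run Colmez's construction over $\Xi$: one solves an equation of the shape $(1-\vp)G=(\text{transform of }\mu)$ inside a large period ring, which converges because, once $h$ is chosen with $D^{-h}(V)=D(V)$, the relevant $\vp$-slopes after dividing by powers of $t$ become positive; the failure of $G$ to be Galois-invariant then produces a $1$-cocycle valued in $\DD_{r+r(V)+h}(\Zp^\times,D(V))$, the shift in the order of the distributions by $r(V)+h$ reflecting the slope of $\vp$ on $D(V)$ and the Perrin-Riou parameter $h$. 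The $\psi$-equivariance of $\mu$ and the way $G_K$ acts on the data force the resulting class to be $G_\infty$-invariant, so it descends and we take it to be $\E(\mu)$.

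Then I would verify the two interpolation formulas by evaluating $\E(\mu)$ against the characters $x^k$ for $k\ge1-h$ (resp.\ against the partial character supported on $1+p^n\Zp$). Unwinding the construction, the identity reduces to the compatibility of the big exponential with the Bloch-Kato exponential $\exp_k$ for $V(\kappa^k)$; here the hypotheses $D^{-h}(V)=D(V)$ and $k\ge1-h$, via the lemma on filtrations, put the filtration pieces of $V(\kappa^k)$ in the position needed for $\exp_k$ to be applied to $(1-\vp)^{-1}(1-\vp^{-1}/p)\int\mu/(-tx)^k$. The Euler-type factor $(1-\vp)^{-1}(1-\vp^{-1}/p)$, the finite-level factor $\vp^{-n}/p^n$ and the factor $(k+h-1)!$ all arise from the exact sequences $0\to\Qp(k)\to\mathrm{Fil}^0(\Bc\otimes V(\kappa^k))\to\cdots$ exactly as in the cyclotomic case, the $\epsilon$-function of \cite[Section~V.1]{co} entering through the finite-level Coleman map; the period $\Omega$ is bookkept throughout by replacing $t$ with $t_\pi$.

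The main obstacle is this last step: showing that the distribution-theoretic construction genuinely interpolates $\exp_k$ with precisely the stated normalising factors at every integer $k\ge1-h$, and getting the finite-level formula right. This requires a careful analysis of the exponential exact sequences, of the convergence of the solution to $(1-\vp)G=\mu$, and of the $\epsilon$-function. By contrast, the reduction to the cyclotomic situation and the descent to $\Qp$ are, once set up through $\eta$, essentially a base-change bookkeeping exercise.
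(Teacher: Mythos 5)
The paper does not prove this theorem at all: it is imported verbatim from Zhang \cite[Theorems 3.3 and 3.6]{zh2}, and the sentence immediately preceding the statement makes clear that it is being quoted, not established. So there is no in-paper argument to compare yours against. That said, your outline does follow the strategy of the cited source (and of its cyclotomic antecedents \cite{pr}, \cite{co}): base-change to $\Xi$ along the isomorphism $\eta:\Gm\rightarrow\F$, use $g\circ\eta=\eta^\vp\circ((1+X)^p-1)$ and $\kappa=\chi\psi$ to identify the Lubin-Tate data with the cyclotomic data up to the unramified twist and the period $\Omega$, run Colmez's distribution-theoretic construction there, and descend using the $\psi$-equivariance. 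That is the right skeleton.

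As a proof, however, what you have written is a plan rather than an argument, and you say so yourself. Everything that makes the theorem nontrivial is deferred: the existence and convergence of the solution to the $(1-\vp)$-equation under the hypothesis $D^{-h}(V)=D(V)$, the precise bookkeeping showing the resulting cocycle has order exactly $r+r(V)+h$, the descent of the class from $\Xi$ to $\Qp$ (you assert it ``descends'' but do not exhibit the Galois descent datum), and above all the verification of the two interpolation formulas with the exact factors $(k+h-1)!$, $(1-\vp)^{-1}(1-\vp^{-1}/p)$, $\vp^{-n}/p^n$ and the $\epsilon$-function. In the context of this paper the honest move is to do what the author does --- cite \cite{zh2} --- or else to reproduce Zhang's computations in full; the intermediate option of a strategic sketch establishes neither.
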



\section{The construction of even and odd Coleman maps}\label{construct}
We construct $\col$ in three steps. First, we prove some elementary results about distributions on $\Zp^\times$ in Section~\ref{el}. In Section~\ref{sp}, we explain how to construct a measure $\mu_\xi\in\DD_0(\Zp^\times,\Xi\otimes D(V))^\psi$ from a given $\xi\in D(V)$ and compute some special values of $\E(\mu_\xi)$ using Theorem~\ref{przh} and results from Section~\ref{el} . Finally, in Section~\ref{mf}, we apply these results to a modular form $f$ by choosing two elements of $D(V_f)$, namely $\xi^\pm$. We then proceed as explained in the introduction to construct $\col$.


\subsection{Distributions on $\Zp^\times$}\label{el}

Let $\mu\in\DD_r(\Zp,\Xi\otimes \DD(V))^{\psi}$, then $\mu\in\DD_r(\Zp^\times,\Xi\otimes \DD(V))^{\psi}$ iff
\[
\sum_{\zeta^p=1}\A_\mu(\zeta(1+X)-1)=0.
\]
On the space of power series satisfying this condition, $D=(1+X)\frac{d}{dX}$ acts bijectively. Moreover, for such $\mu$, we have 
\begin{equation}
D^k\A_\mu(\zeta_{p^n}-1)=\int_{\Zp^\times}\epsilon\left(\frac{x}{p^n}\right)x^k\mu\label{substitute},\\
\end{equation}
see e.g. \cite[Section~I.5]{co}.
\begin{lem}\label{lift}
Any $\mu\in\DD_r(\Zp^\times,\Xi\otimes \DD(V))^{\psi}$ can be lifted to 
\[
\widetilde{\mu}\in\widetilde{\DD_r}(\Qp,\Xi\otimes \DD(V))^{\vp_\DD\otimes\vp=1,\psi}.\] Moreover, the image of such a lift under $\E$ is independent of the choice of the lift.
\end{lem}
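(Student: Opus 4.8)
The plan is to produce the lift in two stages: first extend the distribution from $\Zp^\times$ to all of $\Zp$, and then use the twist-compatible tower structure to extend further to $\Qp$ while imposing the $\vp_\DD\otimes\vp=1$ condition. For the first stage, given $\mu\in\DD_r(\Zp^\times,\Xi\otimes D(V))^\psi$, its Amice transform $\A_\mu$ lies in the subspace of $\Xi[[X]]^\psi\otimes D(V)$ cut out by the condition $\sum_{\zeta^p=1}\A_\mu(\zeta(1+X)-1)=0$ (Remark~\ref{amice} together with the criterion recalled at the start of Section~\ref{el}). Since $D=(1+X)\frac{d}{dX}$ acts bijectively on that subspace, one can realize $\mu$ as $D^h$ applied to a distribution on $\Zp^\times$; extending that preimage by zero off $\Zp^\times$ and applying $D^h$ to the result gives a canonical element of $\DD_r(\Zp,\Xi\otimes D(V))^\psi$ restricting to $\mu$, and the freedom in such an extension is exactly a distribution supported on $\{0\}$, i.e.\ a combination of $\delta$ and its derivatives, which corresponds to a polynomial in $D$ applied to $\delta_0$.

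For the second stage, I would invoke the defining inverse-limit structure of $\widetilde{\DD_r}(\Qp,\Xi\otimes D(V))$ under the twist maps $\Tw$, together with the $\vp_\DD\otimes\vp=1$ condition. The point is that the operator $1-\vp_\DD\otimes\vp$ is invertible on the relevant space once one inverts $p$ in an appropriate sense — more precisely, because $\vp$ acts on $\Xi$ with the Frobenius and on $D(V)$ with bounded slope, the geometric-series expression $(1-\vp_\DD\otimes\vp)^{-1}$ makes sense after passing to $\DD_r$ with a slightly larger order, which is harmless here — so one can solve for a lift whose restriction to $\Zp^\times$ is the given $\mu$ and which is fixed by $\vp_\DD\otimes\vp$. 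Here I would lean on the constructions in \cite[Section~3]{zh2} (compare \cite[Lemma~3.6]{zh1} for well-definedness of $\Tw$), adapting the classical argument of Perrin-Riou that any distribution on $\Zp^\times$ extends to one on $\Qp$ in the relevant eigenspace for $\vp$.

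For the independence of the image under $\E$, the argument is that two lifts $\widetilde\mu_1,\widetilde\mu_2$ of the same $\mu$ differ by an element $\nu\in\widetilde{\DD_r}(\Qp,\Xi\otimes D(V))^{\vp_\DD\otimes\vp=1,\psi}$ supported on $p\Zp$ (equivalently, whose restriction to $\Zp^\times$ vanishes), and it suffices to show $\E(\nu)=0$. One checks this on the explicit interpolation formulas of Theorem~\ref{przh}: for $k\ge 1-h$, the value $\int_{\Zp^\times}x^k\E(\nu)$ is a multiple of $\exp_k$ applied to $(1-\vp)^{-1}(1-\vp^{-1}/p)\int_{\Zp^\times}\mu/(-tx)^k$, which is zero since $\nu$ restricted to $\Zp^\times$ is zero; and similarly $\int_{1+p^n\Zp}x^k\E(\nu)=0$ because $\epsilon(x/p^n)$ is supported on $\Zp^\times$ for $n\ge 1$ (and the $n=0$ case reduces to the previous computation). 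Since these moments determine an element of $H^1(K_\infty,\DD_{r+r(V)+h}(\Zp^\times,D(V)))^{G_\infty}$ — by density of the monomials $x^k$ among locally analytic functions on $\Zp^\times$ and injectivity of the moment map on this cohomology group, as set up in \cite{zh2} — we conclude $\E(\nu)=0$, hence $\E(\widetilde\mu_1)=\E(\widetilde\mu_2)$.

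The main obstacle I anticipate is the second stage: making precise the sense in which $1-\vp_\DD\otimes\vp$ can be inverted on $\widetilde{\DD_r}$ so that a lift in the prescribed eigenspace genuinely exists — one must be careful that this does not force an increase in the order $r$ that would take us outside the stated target, and one must check compatibility of the solution with the twist maps defining the inverse limit. The first stage and the independence statement are comparatively formal, being essentially bijectivity of $D$ and a moment computation, respectively.
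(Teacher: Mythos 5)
The paper offers no argument for this lemma at all --- its ``proof'' is a bare citation of \cite[Lemma IX.2.8 and Remark IX.2.6(iii)]{co} and \cite[Lemma 3.5]{zh2} --- so the comparison can only be against what those results actually say. Your overall architecture (extend by zero from $\Zp^\times$ to $\Zp$, propagate to $\Qp$ by $\vp$-invariance, then show the ambiguity is supported at $\{0\}$ and is killed by $\E$) is the right one and matches Colmez's. But your uniqueness argument contains a concrete error. You claim that $\int_{1+p^n\Zp}x^k\E(\nu)=0$ ``because $\epsilon(x/p^n)$ is supported on $\Zp^\times$ for $n\ge1$.'' This is false: $\epsilon(x/p^n)$ is the additive character $x\mapsto\zeta_{p^n}^{x}$, which is identically $1$ on $p^n\Zp$ and a root of unity everywhere else; it vanishes nowhere on $\Zp$. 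Indeed the paper's own Lemma~\ref{specialvalues} refutes your claim: its entire point is that $\int_{\Zp}\epsilon(x/p^n)x^k\widetilde{\mu}$ picks up, beyond the terms from $p^i\Zp^\times$ with $i<n$, the extra contribution $p^{nk}(1-p^k\vp)^{-1}(D^k\A_\mu(0))$ from the neighbourhood $p^n\Zp$ of the origin. So for $\nu$ a difference of two lifts (which, by $\vp$-invariance plus vanishing on $\Zp^\times$, is supported at $\{0\}$, not merely on $p\Zp$), the interpolation integral does \emph{not} vanish for free; you must actually control the term at $0$, i.e.\ show that $\int_{\Zp}\epsilon(x/p^n)(-tx)^{-k}\nu$ vanishes or contributes a coboundary. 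This is exactly the content of \cite[Remark IX.2.6(iii)]{co} (the admissible ambiguity is a distribution concentrated at $0$, constrained by the requirement that all its twists $(-tx)^{-k}\nu$ remain of order $r$ in the inverse limit defining $\widetilde{\DD_r}$), and it is the step your proposal skips.

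On the existence half: the ``invert $1-\vp_\DD\otimes\vp$'' mechanism you propose, and rightly flag as the weak point, is not how the lift is built. One simply extends $\mu$ by zero to $\Zp$ (this is where the condition $\sum_{\zeta^p=1}\A_\mu(\zeta(1+X)-1)=0$ lives; your detour through bijectivity of $D$ is unnecessary for this) and then defines $\widetilde{\mu}$ on $p^{-n}\Zp^\times$ by transport along $\vp^{-n}$, which forces the relation \eqref{invariant}. The operator $(1-p^k\vp)^{-1}$ only appears afterwards, when evaluating moments near $0$ as in Lemma~\ref{constantterms}, and its invertibility is exactly the standing hypothesis that the eigenvalues of $\vp$ are not integral powers of $p$; no increase in the order $r$ is needed for the extension itself. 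I would accept the existence sketch with that correction, but the uniqueness step needs to be redone.
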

\begin{proof}
\cite[Lemma IX.2.8 and Remark IX.2.6(iii)]{co} and \cite[Lemma~3.5]{zh2}.
\end{proof}

Given any $\mu\in\DD_r(\Zp^\times,\Xi\otimes \DD(V))^{\psi}$, we abuse notation and write $\E(\mu)=\E(\widetilde{\mu})$ where $\widetilde{\mu}$ is a lift given by Lemma~\ref{lift}. The fact that $\vp_\DD\otimes\vp(\widetilde{\mu})=\widetilde{\mu}$ implies that
\begin{equation}\label{invariant}
\int_{pA}f(x)\widetilde{\mu}=\vp\left(\int_Af(px)\widetilde{\mu}\right)
\end{equation}
for any $f$ and $A\subset\Qp$. It allows us to compute some special values of $\widetilde{\mu}$.

\begin{lem}\label{constantterms}With the above notation,
$\int_{\Zp}x^k\widetilde{\mu}=(1-p^k\vp)^{-1}\left(D^k\A_\mu(0)\right)$.
\end{lem}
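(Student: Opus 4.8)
The plan is to compute $\int_{\Zp}x^k\widetilde{\mu}$ by decomposing $\Zp$ into the disjoint union $\Zp^\times\sqcup p\Zp$ and exploiting the $\vp_\DD\otimes\vp$-invariance of $\widetilde{\mu}$ recorded in \eqref{invariant}. Writing $c_k=\int_{\Zp}x^k\widetilde\mu$, I would split the integral as
\[
\int_{\Zp}x^k\widetilde{\mu}=\int_{\Zp^\times}x^k\widetilde{\mu}+\int_{p\Zp}x^k\widetilde{\mu}.
\]
The first term is $\int_{\Zp^\times}x^k\mu=D^k\A_\mu(0)$ by \eqref{substitute} with $n=0$ (where $\epsilon(x/p^0)=1$ on $\Zp$), since $\widetilde\mu$ restricts to $\mu$ on $\Zp^\times$. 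For the second term, apply \eqref{invariant} with $A=\Zp$ and $f(x)=x^k$: this gives $\int_{p\Zp}x^k\widetilde\mu=\vp\big(\int_{\Zp}(px)^k\widetilde\mu\big)=p^k\vp\big(\int_{\Zp}x^k\widetilde\mu\big)=p^k\vp(c_k)$.

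Combining the two pieces yields $c_k=D^k\A_\mu(0)+p^k\vp(c_k)$, i.e. $(1-p^k\vp)c_k=D^k\A_\mu(0)$. To finish I would note that $1-p^k\vp$ is invertible on $\Xi\otimes D(V)$: since $r(V)$ is the slope of $\vp$, the operator $p^k\vp$ is topologically nilpotent (its eigenvalues have positive valuation once we account for the $p^k$ factor, and in any case one inverts by the geometric series $\sum_{j\ge0}(p^k\vp)^j$, which converges because $\vp$ has bounded slope and $p^k\to 0$ $p$-adically as we iterate — more precisely $p^{kj}\to 0$). Hence $c_k=(1-p^k\vp)^{-1}D^k\A_\mu(0)$, which is the claim.

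The only genuine subtlety is making sure the formal manipulations with $\widetilde\mu$ are legitimate: $\widetilde\mu$ lives in $\widetilde{\DD_r}(\Qp,\Xi\otimes\DD(V))$, which is an inverse limit under the twist map $\Tw$, so ``$\int_{\Zp}x^k\widetilde\mu$'' should be interpreted as integration against the appropriate component in this tower, and one must check that \eqref{invariant} and the restriction-to-$\Zp^\times$ identity \eqref{substitute} are compatible with the transition maps. I expect this bookkeeping — rather than the short computation above — to be the main thing to get right, but it is routine given Lemma~\ref{lift} and the definitions of Section~\ref{zhang}. The invertibility of $1-p^k\vp$ is immediate and not an obstacle.
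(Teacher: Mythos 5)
Your proof is correct and follows essentially the same route as the paper: decompose $\Zp=p\Zp\sqcup\Zp^\times$, evaluate the $\Zp^\times$ piece via \eqref{substitute} at $n=0$, apply \eqref{invariant} to the $p\Zp$ piece to get the recursion $c_k=D^k\A_\mu(0)+p^k\vp(c_k)$, and invert $1-p^k\vp$. The only difference is that you spell out the invertibility of $1-p^k\vp$ (which ultimately rests on the standing assumption that the eigenvalues of $\vp$ are not integral powers of $p$) and the inverse-limit bookkeeping, both of which the paper leaves implicit.
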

\begin{proof}Since $\widetilde{\mu_\xi}$ restricts to $\mu_\xi$ on $\Zp^\times$, \eqref{substitute} implies that
\[
\int_{\Zp^\times}x^k\widetilde{\mu_\xi}=\int_{\Zp^\times}x^k\mu_\xi=D^k\A_\mu(0).
\]
Hence, by applying \eqref{invariant} to the decomposition
\[
\int_{\Zp}x^k\widetilde{\mu}=\int_{p\Zp}x^k\widetilde{\mu}+\int_{\Zp^\times}x^k\widetilde{\mu},
\]
we have
\[
\int_{\Zp}x^k\widetilde{\mu}=p^k\vp\left(\int_{\Zp}x^k\widetilde{\mu}\right)+D^k\A_\mu(0).
\]
\end{proof}

\begin{lem}\label{specialvalues}With the notation above,
\[\int_{\Zp}\epsilon\left(\frac{x}{p^n}\right)x^k\widetilde{\mu}=\sum_{i=0}^{n-1}p^{ik}\vp^{i}\left(D^k\A_\mu(\zeta_{p^{n-i}}-1)\right)+p^{nk}(1-p^k\vp)^{-1}(D^k\A_\mu(0)).\]
\end{lem}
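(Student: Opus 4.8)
The plan is to prove the formula by induction on $n$, peeling the subgroup $p\Zp$ off $\Zp$ at each stage and using the $\vp$-invariance relation \eqref{invariant} to reduce to the previous value of $n$. The base case $n=0$ is exactly Lemma~\ref{constantterms}: the sum in the statement is then empty, and since $\epsilon$ factors through $\Qp/\Zp$ we have $\epsilon(x/p^0)=1$ for $x\in\Zp$, so $\int_{\Zp}\epsilon(x/p^0)x^k\widetilde\mu=\int_{\Zp}x^k\widetilde\mu=(1-p^k\vp)^{-1}(D^k\A_\mu(0))$.

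For the inductive step I would split $\int_{\Zp}\epsilon(x/p^n)x^k\widetilde\mu$ over $\Zp=\Zp^\times\cup p\Zp$. On $\Zp^\times$ the lift $\widetilde\mu$ restricts to $\mu$ by Lemma~\ref{lift}, so by \eqref{substitute} the $\Zp^\times$-integral is $D^k\A_\mu(\zeta_{p^n}-1)$, the $i=0$ term of the sum. On $p\Zp$ I would apply \eqref{invariant} with $A=\Zp$ and $f(x)=\epsilon(x/p^n)x^k$; using the algebraic identity $\epsilon(px/p^n)=\epsilon(x/p^{n-1})$ and the fact that $\vp$ commutes with the scalar $p^k\in\Qp$, this becomes $p^k\vp\big(\int_{\Zp}\epsilon(x/p^{n-1})x^k\widetilde\mu\big)$. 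Feeding in the inductive hypothesis for the inner integral, applying $p^k\vp$ termwise, shifting the summation index by one, and adding the $i=0$ term, the finite sum $\sum_{i=0}^{n-1}$ assembles correctly and the remaining piece is $p^{nk}\vp\big((1-p^k\vp)^{-1}(D^k\A_\mu(0))\big)=p^{nk}\vp\big(\int_{\Zp}x^k\widetilde\mu\big)$ by Lemma~\ref{constantterms}.

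It remains to match this last term with the constant contribution $p^{nk}(1-p^k\vp)^{-1}(D^k\A_\mu(0))$ of the statement; equivalently, one checks that the Frobenius powers accumulated through the induction collapse on the constant term. The same computation done all at once is the geometric-series identity $\int_{\Zp}\epsilon(x/p^n)x^k\widetilde\mu=\sum_{m=0}^{n-1}p^{mk}\vp^m(D^k\A_\mu(\zeta_{p^{n-m}}-1))+\big(\sum_{m\ge n}p^{mk}\vp^m\big)(D^k\A_\mu(0))$, obtained from the decomposition $\Zp\setminus\{0\}=\bigcup_{m\ge0}p^m\Zp^\times$ by iterating \eqref{invariant} $m$ times on each piece (with $\epsilon$ trivial on $\Zp^\times$ once $m\ge n$) and summing the tail via Lemma~\ref{constantterms}.

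I expect the only genuine obstacle to be the bookkeeping of the semilinear operator $\vp$ on $\Xi\otimes D(V)$: one must track where $\vp$ acts semilinearly over $\Xi$ and where it is merely $\Qp$-linear, use the invertibility of $1-p^k\vp$ already invoked in Lemma~\ref{constantterms} (so that $(1-p^k\vp)^{-1}=\sum_{j\ge0}p^{jk}\vp^j$ makes sense), and confirm the collapse of the accumulated $\vp$-powers against the factor $p^{nk}$. Everything else is a direct substitution of \eqref{substitute}, \eqref{invariant} and Lemma~\ref{constantterms}.
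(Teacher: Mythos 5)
Your argument is correct and is essentially the paper's: the paper likewise decomposes $\Zp=\Zp^\times\cup p\Zp^\times\cup\cdots\cup p^{n-1}\Zp^\times\cup p^n\Zp$, iterates \eqref{invariant} on each annulus, and finishes with \eqref{substitute} and Lemma~\ref{constantterms}, which is exactly your ``all at once'' version (and your induction is just the same computation unrolled one step at a time). The one point you defer --- matching $p^{nk}\vp^n\bigl((1-p^k\vp)^{-1}(D^k\A_\mu(0))\bigr)$ with the stated constant term --- cannot in fact be checked away: the extra $\vp^n$ is genuinely there, the paper's own displayed computation produces it too, and the printed statement is simply missing it. This is harmless downstream, since in Proposition~\ref{special} the lemma is fed into the $\frac{\vp^{-n}}{p^n}$ of Theorem~\ref{przh}, which cancels the $\vp^n$; but you should state the lemma with the $\vp^n$ rather than hope the Frobenius powers ``collapse.''
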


\begin{proof}
Since $\Zp=\Zp^\times\cup p\Zp^\times\cup\cdots\cup p^{n-1}\Zp^\times\cup p^n\Zp$, we have
\begin{eqnarray*}
&&\int_{\Zp}\epsilon\left(\frac{x}{p^n}\right)x^k\widetilde{\mu}\\
&=&\sum_{i=0}^{n-1}\int_{p^i\Zp^\times}\epsilon\left(\frac{x}{p^n}\right)x^k\mu+\int_{p^n\Zp}\epsilon\left(\frac{x}{p^n}\right)x^k\widetilde{\mu}\\
&=&\sum_{i=0}^{n-1}p^{ik}\vp^{i}\left(\int_{\Zp^\times}\epsilon\left(\frac{x}{p^{n-i}}\right)x^k\mu\right)+p^{nk}\vp^n\int_{\Zp}x^k\widetilde{\mu}
\end{eqnarray*}
where the last equality follows from repeated applications of \eqref{invariant}. Hence the result by \eqref{substitute} and Lemma~\ref{constantterms}.
\end{proof}


\subsection{Computing some special values}\label{sp}

With the notation above, we define
\begin{displaymath}\eb(X)=\eta(X)-\frac{1}{p}\sum_{\zeta^p=1}\eta(\zeta(1+X)-1).\end{displaymath}
Then $\sum_{\zeta^p=1}\eb(\zeta(1+X)-1)=0$. Moreover, we have:

\begin{lem}\label{eb}
We have $\eb\in\Xi[[X]]^\psi$.
\end{lem}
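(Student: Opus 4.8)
The plan is to show that $\eb$ lies in $\Xi[[X]]^\psi$ by checking the defining Galois-equivariance condition directly, using that $\eta\in\Xi[[X]]^\psi$ (which is \cite[(1.13)]{zh2}) together with the compatibility of the operation $f(X)\mapsto f(\zeta(1+X)-1)$ with the substitution $X\mapsto (1+X)^{\psi(\sigma)}-1$. Concretely, for $\sigma\in G_K$ I must verify that
\[
\sigma\eb(X)=\eb\big((1+X)^{\psi(\sigma)}-1\big).
\]
The left-hand side splits as $\sigma\eta(X)-\tfrac1p\sum_{\zeta^p=1}\sigma\big(\eta(\zeta(1+X)-1)\big)$, since $p$ and the sum over $\zeta$ are $\sigma$-fixed in the appropriate sense. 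The first term is $\eta((1+X)^{\psi(\sigma)}-1)$ by hypothesis. For the sum, $\sigma$ acts on the coefficients of $\eta$ and on the root of unity $\zeta$; writing $Y=(1+X)^{\psi(\sigma)}-1$, so that $1+Y=(1+X)^{\psi(\sigma)}$, one gets $\sigma\big(\eta(\zeta(1+X)-1)\big)=(\sigma\eta)\big(\zeta^{\chi(\sigma)}(1+X)^{\chi(\sigma)}-1\big)$; then applying $\sigma\eta=\eta\circ((1+\cdot)^{\psi(\sigma)}-1)$ converts this to $\eta\big(\zeta^{\chi(\sigma)\psi(\sigma)}(1+X)^{\chi(\sigma)\psi(\sigma)}-1\big)=\eta\big(\zeta^{\kappa(\sigma)}(1+X)^{\kappa(\sigma)}-1\big)$, using $\kappa=\chi\psi$.

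The one remaining point is that as $\zeta$ runs over all $p$-th roots of unity, so does $\zeta^{\kappa(\sigma)}$, because $\kappa(\sigma)$ is a $p$-adic unit (it is the value of the Lubin--Tate character) and hence coprime to $p$; so $\zeta\mapsto\zeta^{\kappa(\sigma)}$ permutes the index set of the sum. Therefore
\[
\frac1p\sum_{\zeta^p=1}\sigma\big(\eta(\zeta(1+X)-1)\big)=\frac1p\sum_{\zeta^p=1}\eta\big(\zeta(1+X)^{\psi(\sigma)}-1\big)=\frac1p\sum_{\zeta^p=1}\eta\big(\zeta(1+Y)-1\big),
\]
which is exactly the averaging term in $\eb(Y)$. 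Combining, $\sigma\eb(X)=\eta(Y)-\tfrac1p\sum_{\zeta^p=1}\eta(\zeta(1+Y)-1)=\eb(Y)=\eb((1+X)^{\psi(\sigma)}-1)$, as required; and $\eb$ is visibly a power series over $\Xi$ since $\eta$ is. This establishes $\eb\in\Xi[[X]]^\psi$.

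I do not expect any genuine obstacle here — the argument is a short bookkeeping computation of the same flavour as the proof of Lemma~\ref{eta}. The only mild subtlety worth stating carefully is the reindexing step: one should make explicit that $\kappa(\sigma)\in\Zp^\times$ so that exponentiation by $\kappa(\sigma)$ is a bijection on $\mu_p$, and note that $\zeta^{\chi(\sigma)}(1+X)^{\chi(\sigma)}$ is the correct expression for $\sigma$ applied to $\zeta(1+X)$ (the Galois action on the root of unity $\zeta$ is via $\chi$, and $\sigma$ acts trivially on $X$ as a formal variable, but the substitution identity for $\sigma\eta$ reintroduces the $\psi(\sigma)$-twist). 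One could alternatively deduce the lemma from Remark~\ref{amice}: $\eb$ is the Amice transform of the distribution $\mu_\eta|_{\Zp^\times}$ obtained by restricting the distribution with Amice transform $\eta$ to $\Zp^\times$, and restriction to $\Zp^\times$ preserves the $\psi$-property since it commutes with the scaling action $x\mapsto\psi(\sigma)x$; I would mention this as a remark but give the direct computation as the main proof.
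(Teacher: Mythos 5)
Your argument is the same as the paper's: expand $\sigma\eb(X)$, apply the relation $\sigma\eta(X)=\eta((1+X)^{\psi(\sigma)}-1)$ to each summand, and reindex the sum over the $p$-th roots of unity using $\kappa(\sigma)\in\Zp^\times$; this is correct in substance. One intermediate formula contains a slip: since $\sigma$ fixes the formal variable $X$, you should get $\sigma\bigl(\eta(\zeta(1+X)-1)\bigr)=(\sigma\eta)\bigl(\zeta^{\chi(\sigma)}(1+X)-1\bigr)$ with no exponent on $(1+X)$, and then substituting into $(\sigma\eta)(Y)=\eta((1+Y)^{\psi(\sigma)}-1)$ yields $\eta\bigl(\zeta^{\kappa(\sigma)}(1+X)^{\psi(\sigma)}-1\bigr)$, not $\eta\bigl(\zeta^{\kappa(\sigma)}(1+X)^{\kappa(\sigma)}-1\bigr)$ as you wrote; taken literally your version would give the averaging term evaluated at $(1+X)^{\kappa(\sigma)}-1$ rather than at $(1+X)^{\psi(\sigma)}-1$ and the equivariance would not close up. Your displayed equation already carries the correct exponent $\psi(\sigma)$, so with this one correction the proof matches the paper's.
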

\begin{proof}
Let $\sigma\in G_{\Qp}$ and $\zeta$ a $p$th root of unity. By \cite[(1.13)]{zh2}, $\eta\in\Xi[[X]]^\psi$ and $\sigma\eta(X)=\eta((1+X)^{\psi(\sigma)}-1)$. If we replace $X$ by $\zeta^\sigma(1+X)-1$, we have
\begin{eqnarray*}
\sigma(\eta(\zeta(1+X)-1))&=&(\sigma\eta)(\zeta^\sigma(1+X)-1)\\
&=&\eta(\left(\zeta^{\sigma}(1+X)\right)^{\psi(\sigma)}-1)\\
&=&\eta(\zeta^{\kappa(\sigma)}(1+X)^{\psi(\sigma)}-1)
\end{eqnarray*}
Hence, on summing over $\zeta^p=1$, we have
\begin{eqnarray*}
\sigma\left(\sum_{\zeta^p=1}\eta(\zeta(1+X)-1)\right)&=&\sum_{\zeta^p=1}\sigma(\eta(\zeta(1+X)-1))\\
&=&\sum_{\zeta^p=1}\eta(\zeta^{\kappa(\sigma)}(1+X)^{\psi(\sigma)}-1)\\
&=&\sum_{\zeta^p=1}\eta(\zeta(1+X)^{\psi(\sigma)}-1)\ ({\rm as\ }\kappa(\sigma)\in\Zp^\times).
\end{eqnarray*}
Hence, the sum $\sum_{\zeta^p=1}\eta(\zeta(1+X)-1)\in\Xi[[X]]^\psi$, so we are done.
\end{proof}

Let $\xi\in \DD(V)$, then $\eb(X)\otimes\xi$ defines an element $\mu_\xi\in\DD_0(\Zp^\times,\Xi\otimes\DD(V))$ with
\begin{displaymath}
\eb(X)\otimes\xi=\int_{\Zp^\times}(1+X)^x\mu_\xi.
\end{displaymath}
By Lemma \ref{eb} and Remark \ref{amice}, $\mu_\xi\in \DD_0(\Zp^\times,\Xi\otimes \DD(V))^\psi$. On applying the Perrin-Riou's exponential, we have:

\begin{propn}\label{special}
With the notation above, we have for $n\ge1$ and $k\ge 1-h$
\[
\int_{1+p^n\Zp}(-x)^k\E(\mu_\xi)=(k+h-1)!\exp_k\left(\gamma_{n,k}(\xi)\right)
\]
where $\gamma_{n,k}(\xi)$ is defined by
\[
\frac{1}{p^n}\left(\sum_{i=0}^{n-i}D^{-k}\eb^{\vp^{i-n}}(\zeta_{p^{n-i}}-1)\otimes\vp^{i-n}(\xi_k)+(1-\vp)^{-1}(D^{-k}\eb(0)\otimes\xi_k)\right)
\]
with $\xi_k=\xi t^{-k}$.
\end{propn}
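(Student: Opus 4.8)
The plan is to apply Theorem~\ref{przh} to the measure $\mu_\xi$ (or rather to one of its lifts $\widetilde{\mu_\xi}$ provided by Lemma~\ref{lift}), specialising to the character $x\mapsto x^k$ restricted to $1+p^n\Zp$, and then to massage the resulting expression into the stated closed form using the elementary computations of Section~\ref{el}. The sign $(-x)^k$ on the left-hand side is purely bookkeeping: it matches the factor $(-tx)^{-k}$ appearing inside the integrals in Theorem~\ref{przh}, so that the twisted measure $\mu_\xi/(-tx)^k$ carries the $t^{-k}$ into $\xi_k = \xi t^{-k}$ as required.

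First I would recall from Theorem~\ref{przh} that
\[
\int_{1+p^n\Zp}x^k\E(\mu_\xi)=(k+h-1)!\,\exp_k\!\left(\frac{\vp^{-n}}{p^n}\int_{\Zp}\epsilon\!\left(\frac{x}{p^n}\right)\frac{\widetilde{\mu_\xi}}{(-tx)^k}\right),
\]
so the whole problem reduces to evaluating the inner integral $\int_{\Zp}\epsilon(x/p^n)\,x^{-k}\,\widetilde{\mu_\xi}$ against the twisted measure and checking it equals $p^n\vp^{n}\gamma_{n,k}(\xi)$. The Amice transform of $\mu_\xi$ is $\eb(X)\otimes\xi$ by construction, and under the twist by $(-tx)^{-k}$ the relevant operator identity $D^{-k}\A(\zeta_{p^m}-1)=\int_{\Zp^\times}\epsilon(x/p^m)x^{-k}\mu$ from \eqref{substitute} identifies the values of $D^{-k}\eb$ at the points $\zeta_{p^{m}}-1$ with the corresponding moments of the twisted $\mu_\xi$ on $\Zp^\times$. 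Then I would invoke Lemma~\ref{specialvalues} applied to the lift $\widetilde{\mu_\xi}$ of the twisted measure: its left-hand side is exactly $\int_{\Zp}\epsilon(x/p^n)x^{-k}\widetilde{\mu_\xi}$, and its right-hand side decomposes as a sum $\sum_{i=0}^{n-1}p^{-ik}\vp^i(D^{-k}\A_{\mu_\xi}(\zeta_{p^{n-i}}-1))+p^{-nk}(1-p^{-k}\vp)^{-1}(D^{-k}\A_{\mu_\xi}(0))$. Substituting $\A_{\mu_\xi}=\eb\otimes\xi$ and pulling the $t^{-k}$ through gives the two types of terms in $\gamma_{n,k}(\xi)$, with the $\vp^{i-n}$ (rather than $\vp^i$) appearing because the overall $\vp^{-n}/p^n$ prefactor in Theorem~\ref{przh} absorbs the $\vp^n$ and shifts the index; one must also track the action of $\vp$ on $\eb$ itself, which is why the terms read $\eb^{\vp^{i-n}}$ evaluated at the roots of unity. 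The factor $(1-\vp)^{-1}$ in the constant term arises from $(1-p^{-k}\vp)^{-1}$ acting on $\xi_k=\xi t^{-k}$ via $\vp(\xi_k)=\vp(\xi)p^{-k}t^{-k}\cdot(\text{scalar})$ — more precisely $\vp(t^{-k})=p^{-k}t^{-k}$, so $p^{-k}\vp$ acting on the $\xi_k$-component of $D^{-k}\eb(0)\otimes\xi$ becomes plain $\vp$ on $D^{-k}\eb(0)\otimes\xi_k$, and $(1-p^{-k}\vp)^{-1}$ becomes $(1-\vp)^{-1}$.

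The main obstacle, as usual in these computations, will be getting every Frobenius and every power of $p$ in the right place: one must carefully distinguish $\vp$ acting on the coefficients of $\eb$ (which is genuinely nontrivial, since $\eb\in\Xi[[X]]$ has coefficients in the unramified closure), $\vp$ acting on the Dieudonné module component $\xi$, and $\vp$ acting on $t^{-k}$ via $\vp(t)=pt$ inside $\Bc$. The interplay is exactly what converts the relation $\vp_\DD\otimes\vp(\widetilde{\mu_\xi})=\widetilde{\mu_\xi}$ used in \eqref{invariant} into the clean statement; getting the substitution $\zeta_{p^{n-i}}^{\vp^{\pm?}}$ correct (the Galois-equivariance built into $\Xi[[X]]^\psi$ via Lemma~\ref{eta}, which is why $\eb^{\vp^{i-n}}(\zeta_{p^{n-i}}-1)$ lands in the correct field $K_{n-i}$) requires care. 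I would also double-check the summation range: the displayed $\sum_{i=0}^{n-i}$ in the statement of $\gamma_{n,k}$ should be $\sum_{i=0}^{n-1}$, matching the decomposition $\Zp=\Zp^\times\cup p\Zp^\times\cup\cdots\cup p^{n-1}\Zp^\times\cup p^n\Zp$ used in Lemma~\ref{specialvalues}, and the single leftover term ($i=n$ contribution on $p^n\Zp$) is precisely the $(1-\vp)^{-1}$ constant-term piece. Everything else is a routine, if fiddly, bookkeeping exercise combining Theorem~\ref{przh}, Lemma~\ref{specialvalues}, and the formula \eqref{substitute} for $D^k\A_\mu$ at roots of unity.
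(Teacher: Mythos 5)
Your proposal is correct and takes essentially the same route as the paper, whose proof is literally the one-line citation of Theorem~\ref{przh}, Lemmas~\ref{constantterms} and~\ref{specialvalues}, and $\vp(t)=pt$ that you expand; you also rightly flag the typo $\sum_{i=0}^{n-i}$ for $\sum_{i=0}^{n-1}$. One caveat on the constant term: the statement of Lemma~\ref{specialvalues} as printed is missing a $\vp^n$ (it is present in the last display of that lemma's proof, $p^{nk}\vp^n\int_{\Zp}x^k\widetilde{\mu}$), and it is precisely this $\vp^n$ that the prefactor $\vp^{-n}/p^n$ from Theorem~\ref{przh} cancels, leaving the bare $(1-\vp)^{-1}(D^{-k}\eb(0)\otimes\xi_k)$ with no residual Frobenius; if you apply the lemma exactly as stated, your constant term comes out as $(1-\vp)^{-1}$ of $\vp^{-n}(D^{-k}\eb(0)\otimes\xi_k)$, which does not match the proposition.
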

\begin{proof}
The result follows from combining Theorem~\ref{przh} with Lemmas~\ref{constantterms} and \ref{specialvalues} and the fact that $\vp(t)=pt$.
\end{proof}

Our assumption on the eigenvalues of $\vp$ implies that there is an isomorphism
\begin{eqnarray*}
H^1(K_\infty,\DD_r(Z_p^\times,V))^{G_\infty}&\cong&\DD_r(G_\infty)\otimes\HIw(V)\\
\mu&\mapsto&\left(\int_{1+p^n\Zp}\mu\right)_n
\end{eqnarray*}
where $\HIw(V):=\displaystyle\lim_{\underset{\rm cor}{\leftarrow}}H^1(K_n,V)$ and $\DD_r(G_\infty)=\DD_r(G_\infty,\Qp)$ (see e.g. \cite[Proposition 2]{co}). Under this identification, we have
\[
\E(\mu_\xi)\in \DD_{h+r(V)}(G_\infty)\otimes\HIw(V).
\]

Write $\Tw_k:\HIw(V)\rightarrow\HIw(V(\kappa^k))$ for the twist map. Recall that $\Tw_k(\mu)=(-tx)^{-k}\mu$, so Proposition~\ref{special} implies that if $n\ge1$ and $k\ge1-h$, the $n$th component of $\Tw_k(\E(\mu))$ is given by
\begin{equation}
(k+h-1)!\exp_{k}(\gamma_{n,k}(\xi))
\end{equation}
where $\exp_{k}$ now denotes the exponential map $K_n\otimes\DD(V(\kappa^k))\rightarrow H^1(K_n,V(\kappa^k))$.

Recall that $G_\infty\cong G_1\times \Gamma$ where $\Gamma\cong\Zp$. We fix a topological generator $\gamma$ of $\Gamma$, then $\DD_r(G_\infty)$ can be identified with the set of power series in $\gamma-1$ over $\Qp[G_1]$ which are $O(\log_p^r)$.

We now assume that $V$ has a $F$-vector space structure where $F$ is a finite extension of $\Qp$ and the action of $G_K$ commutes with the multiplication by $F$. Denote the ring of integers of $F$ by $\OF$. Let $\Lambda=\OF[[G_\infty]]=\displaystyle\lim_\leftarrow\OF[G_n]$, then there is a pairing
\begin{eqnarray*}
<,>:\HIw(V)\times\HIw(V^*(1))&\rightarrow&\QQ\otimes\Lambda\\
\left((x_n)_n,(y_n)_n\right)&\mapsto&\left(\sum_{\sigma\in G_n}[x_n^\sigma,y_n]_n\sigma\right)_n
\end{eqnarray*}
where $[,]_n$ is the pairing on $H^1(K_n,V)\times H^1(K_n,V^*(1))\rightarrow F$. It extends to
\begin{displaymath}
\Big(\DD_{m}(G_\infty)\underset{\Lambda}{\otimes}\HIw(V)\Big)\times\Big(\DD_{n}(G_\infty)\underset{\Lambda}{\otimes}\HIw(V^*(1))\Big)\rightarrow\DD_{m+n}(G_\infty)
\end{displaymath}
for all $m,n\in\mathbb{R}_{\ge0}$. This enables us to define the following:
\begin{defn}
For a fixed $\xi\in D(V)$, we define a map
\begin{eqnarray*}\LL_\xi^h:\HIw(V^*(1))&\rightarrow&\DD_{r(V)+h}(G_\infty)\\
\z&\mapsto&<\E(\mu_\xi),\z>.\end{eqnarray*}
\end{defn}

Following the calculations of \cite{ku}, we find that for $n\ge1$, the $n$th component of $\Tw_k\LL_\xi(\z)$ is given by:
\begin{eqnarray*}
\left(\Tw_k\LL_\xi^h(\z)\right)_n&=&(h+k-1)!\sum_{\sigma\in G_n}[\exp_k(\gamma_{n,k}(\xi)^\sigma),z_{-k,n}]_n\sigma\\
&=&(h+k-1)![\sum_{\sigma\in G_n}\gamma_{n,k}(\xi)^\sigma\sigma,\sum_{\sigma\in G_n}\exp^*_k(z_{-k,n}^\sigma)\sigma^{-1}]_n
\end{eqnarray*}
where $z_{-k,n}$ denotes the image of $\z$ under
\begin{displaymath}
\HIw(V^*(1))\rightarrow\HIw(V^*(1)(\kappa^{-k}))\rightarrow H^1(K_n,V^*(1)(\kappa^{-k}))
\end{displaymath} and $\Tw_k$ acts on $\DD_{r(V)+h}(G_\infty)$ by $\sigma\mapsto\kappa(\sigma)^k\sigma$ for $\sigma\in G_\infty$.

Let $\theta$ be a character on $G_n$ which does not factor through $G_{n-1}$. Since $D^{-k}\eb^{\vp^{i-n}}(\zeta_{p^{n-i}}-1)\in K_{n-i}$ by Lemma~\ref{eta}, we have 
\begin{displaymath}
\theta\left(\sum_{\sigma\in G_n}\gamma_{n,k}(\xi)^\sigma\sigma\right)=\frac{1}{p^n}\sum_{\sigma\in G_n}D^{-k}\eb^{\vp^{-n}}(\zeta_{p^n}-1)^\sigma\theta(\sigma)\otimes\vp^{-n}(\xi_k).
\end{displaymath}
Hence, as in \cite[Lemma 1.4]{l}, we have
\begin{equation}\label{chara}
\begin{split}
&\frac{1}{(h+k-1)!}\kappa^k\theta(\LL_\xi^h(\z))\\=\ &\frac{1}{p^n}\left[\sum_{\sigma\in G_n}D^{-k}\eb^{\vp^{-n}}(\zeta_{p^n}-1)^\sigma\theta(\sigma)\otimes\vp^{-n}(\xi_k),\sum_{\sigma\in G_n}\exp^*_k(z_{-k,n}^\sigma)\theta(\sigma^{-1})\right]_n.\end{split}
\end{equation}

\subsection{Modular forms}\label{mf}
From now on, we fix a normalised newform $f=\sum a_nq^n$ of integral weight $k\ge2$ with $p$ a supersingular prime for $f$ and $a_p=0$ (i.e. $p$ divides $a_p$ but not the level of $f$). We allow the character of $f$ to be arbitrary, but for the sole purpose of easing notation, we assume that the character of $f$ takes value $1$ at $p$. Let $V_f$ be the Deligne representation of $G_\QQ$ defined in \cite{de}. Let $L=\QQ(a_n:n\ge1)$ be the field of coefficients of $f$ and fix a place of $L$ above $p$. Then, $V$ is a two-dimensional vector space over $F=L_v$ and the action of $G_\QQ$ commutes with $F$. If we take $V$ to be $V_f(1)$, the Frobenius $\vp$ on $D(V)$ satisfies
\begin{displaymath}
\vp^2-\frac{a_p}{p}\vp+p^{k-3}=0.
\end{displaymath}
In particular, $r(V)=(k-1)/2-1$ and the assumption that the eigenvalues of $\vp$ on $D(V_f)$ are not integral powers of $p$ is automatically satisfied. On taking $h=1$ in Theorem~\ref{przh} and writing $\LL_\xi$ for $\LL_\xi^h$, we have Im$(\LL_\xi)\subset\DD_{(k-1)/2}(G_\infty)$ for any $\xi\in D(V)$.

The de Rham filtration of $D(V_f)$ is given by
\begin{displaymath}
D^i(V_f)=D^0(V_f(i))=\left\{
\begin{array}{ll}
D(V_f)&\mathrm{if\ \ }i\leq0\\
0&\mathrm{if\ \ }i\ge k\\
F\cdot \omega&\mathrm{if\ \ }1\le i\le k-1.
\end{array}\right.
\end{displaymath}
where $\omega$ is any non-zero element of $D^1(V_f)=D^0(V)$. We fix one such $\omega$, this corresponds to a choice of periods for $f$ (see \cite{ka}). We have $D^0(V(j))=D^0(V(\kappa^j))=F\cdot\omega$ for $0\le j\le k-2$.

Let $\gamma=\kappa(u)$, then we can define $\log_{p,k}^\pm$ as in \cite{po}:
\begin{eqnarray*}
\log_{p,k}^+&=&\prod_{j=0}^{k-2}\prod_{n=1}^\infty\frac{\Phi_{2n}(\gamma^{-j}u)}{p},\\
\log_{p,k}^-&=&\prod_{j=0}^{k-2}\prod_{n=1}^\infty\frac{\Phi_{2n-1}(\gamma^{-j}u)}{p},
\end{eqnarray*}
where $\Phi_m$ denotes the $p^m$th cyclotomic polynomial. In particular, the zeros of $\log_{p,k}^+$ are given by $\kappa^j\theta$ where $0\le j\le k-2$ and $\theta$ is a character of $G_n$ which does not factor through $G_{n-1}$ with $n$ odd, whereas those of $\log_{p,k}^-$ are characters of the same form but with even $n$. Moreover, $\log_{p,k}^\pm$ have exact order $\log_p^{\frac{k-1}{2}}$. We can now give a generalisation of \cite[Lemma 2.2]{l}:
\begin{lem}
Let $\xi^+=\vp(\omega)$ and $\xi^-=\omega$, then $\log_{p,k}^\pm|\LL_{\xi^\pm}(\z)$ for all $\z\in\HIw(V^*(1))$.
\end{lem}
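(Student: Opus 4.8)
The plan is to show that each zero of $\log_{p,k}^+$ is a zero of $\LL_{\xi^+}(\z)$ (and similarly for the minus case), since $\DD_{(k-1)/2}(G_\infty)$ is a domain in which divisibility can be checked on zeros of the appropriate multiplicity, and $\log_{p,k}^\pm$ has simple zeros at exactly the characters described. Concretely, I would fix $j$ with $0\le j\le k-2$ and a character $\theta$ of $G_n$ not factoring through $G_{n-1}$, with $n$ odd for the $+$ case and $n$ even for the $-$ case, and I would show $\kappa^j\theta\bigl(\LL_{\xi^\pm}(\z)\bigr)=0$ using the explicit interpolation formula \eqref{chara}.

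The key computation is the evaluation of \eqref{chara} at $k$ replaced by $j$ and the character $\theta$. The right-hand side is, up to the harmless factor $(h+j-1)!=j!$ and $\frac{1}{p^n}$, the pairing
\[
\left[\sum_{\sigma\in G_n}D^{-j}\eb^{\vp^{-n}}(\zeta_{p^n}-1)^\sigma\theta(\sigma)\otimes\vp^{-n}(\xi_j),\ \sum_{\sigma\in G_n}\exp^*_j(z_{-j,n}^\sigma)\theta(\sigma^{-1})\right]_n.
\]
Since $D^0(V(j))=F\cdot\omega$ and the exponential map $\exp_j$ for $V(\kappa^j)$ has image landing so that $\exp^*_j$ takes values in $D^0(V^*(1)(\kappa^{-j}))$, the pairing only sees the component of $\vp^{-n}(\xi_j)$ that lies in the line $F\cdot\omega$; the pairing $[,]_n$ is trivial on the part of $D(V(\kappa^j))$ orthogonal to $D^0(V^*(1)(\kappa^{-j}))$ under the de Rham pairing. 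So the vanishing reduces to showing that $\vp^{-n}(\xi^\pm_j)$ has zero component along $\omega$ for the relevant parity of $n$. Using $\xi^+=\vp(\omega)$, $\xi^-=\omega$, the relation $\vp^2=\frac{a_p}{p}\vp-p^{k-3}=-p^{k-3}$ (since $a_p=0$), and $\xi_j=\xi t^{-j}$ together with $\vp(t)=pt$, the operator $\vp^{-n}$ acting on $\xi^\pm_j$ becomes, up to a scalar, $\vp^{\mp1-n}$ applied to $\omega\otimes t^{-j}$; since $\vp^2$ is a scalar, $\vp^{-n}(\omega)$ is a scalar multiple of $\omega$ when $n$ is even and of $\vp(\omega)$ when $n$ is odd. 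As $\{\omega,\vp(\omega)\}$ is a basis of $D(V_f)$ with $\vp(\omega)\notin F\cdot\omega$ (the Frobenius is irreducible here), $\vp^{-n}(\xi^+_j)$ has no $\omega$-component when $n$ is odd, and $\vp^{-n}(\xi^-_j)$ has no $\omega$-component when $n$ is even, which is exactly the vanishing we need.

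I would then assemble this: for the $+$ case, all characters $\kappa^j\theta$ with $0\le j\le k-2$ and $\theta$ of conductor $p^n$ with $n$ odd are zeros of $\LL_{\xi^+}(\z)$; these are precisely the zeros of $\log_{p,k}^+$, each simple, so $\log_{p,k}^+\mid\LL_{\xi^+}(\z)$ in $\DD_{(k-1)/2}(G_\infty)$ — one needs the standard fact that a power series of order $O(\log_p^{(k-1)/2})$ vanishing at a set of characters is divisible by any element of the same order vanishing exactly at that set (a Weierstrass-preparation/$p$-adic Hensel-type argument over $\QQ_p[G_1]$, as in \cite{po} and \cite{l}). The minus case is identical with $n$ even.

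The main obstacle is the second step: making precise that the pairing $[,]_n$ composed with $\exp^*_j$ annihilates the part of $\vp^{-n}(\xi^\pm_j)$ outside $D^0$, i.e. correctly tracking the de Rham filtration under the twists $V\mapsto V(\kappa^j)$ (using the filtration lemma $D^i(V(\kappa^j))=t_\pi^{-j}D^{i+j}(V)$) and the identification of $\exp^*_j$ with the dual exponential landing in the right graded piece. The $\vp$-eigenvalue bookkeeping, while essential, is routine once the filtration statement is set up. Care is also needed at $j=k-2$ versus $j\ge k-1$ so that we stay in the range $j\ge 1-h=0$ where Proposition~\ref{special} applies and where $D^0(V(\kappa^j))=F\cdot\omega$ still holds.

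\begin{proof}
It suffices to show that every zero of $\log_{p,k}^+$ is a zero of $\LL_{\xi^+}(\z)$, and likewise for the minus sign; since $\log_{p,k}^\pm$ has only simple zeros and both sides have order $O(\log_p^{(k-1)/2})$, divisibility in $\DD_{(k-1)/2}(G_\infty)$ then follows exactly as in \cite{po} and \cite[Lemma 2.2]{l}.

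Fix $j$ with $0\le j\le k-2$ and a character $\theta$ of $G_n$ not factoring through $G_{n-1}$; for $\log_{p,k}^+$ take $n$ odd, for $\log_{p,k}^-$ take $n$ even. By \eqref{chara} with $k$ replaced by $j$ (which is $\ge 1-h=0$), we have
\begin{equation*}
\frac{\kappa^j\theta(\LL_{\xi^\pm}(\z))}{j!}=\frac{1}{p^n}\left[\sum_{\sigma\in G_n}D^{-j}\eb^{\vp^{-n}}(\zeta_{p^n}-1)^\sigma\theta(\sigma)\otimes\vp^{-n}(\xi^\pm_j),\ \sum_{\sigma\in G_n}\exp^*_j(z_{-j,n}^\sigma)\theta(\sigma^{-1})\right]_n,
\end{equation*}
where $\xi^\pm_j=\xi^\pm t^{-j}$. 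The pairing $[,]_n$ is, up to the trace from $K_n$, induced by the de Rham pairing $D(V(\kappa^j))\times D(V^*(1)(\kappa^{-j}))\to F$, and $\exp^*_j$ takes values in the line $D^0(V^*(1)(\kappa^{-j}))$. Under the perfect de Rham pairing, the annihilator of $D^0(V^*(1)(\kappa^{-j}))$ is $D^1(V(\kappa^j))$; by the filtration lemma $D^1(V(\kappa^j))=t_\pi^{-j}D^{1+j}(V)=t_\pi^{-j}D^{j}(V_f)$, and for $0\le j\le k-2$ this is a complement of the line $F\cdot\omega\, t^{-j}$ inside $D(V(\kappa^j))$. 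Hence the right-hand side vanishes as soon as the component of $\vp^{-n}(\xi^\pm_j)$ along $\omega\, t^{-j}$ is zero.

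Now $\vp(t)=pt$, so $\vp^{-n}(\xi^\pm_j)=p^{nj}\,\vp^{-n}(\xi^\pm)\,t^{-j}$, and it remains to compute the $\omega$-component of $\vp^{-n}(\xi^\pm)$. Since $a_p=0$, the Frobenius on $D(V_f)$ satisfies $\vp^2=-p^{k-3}$, so $\vp$ is invertible and $\vp^{-2}$ is the scalar $-p^{-(k-3)}$. Therefore $\vp^{-n}(\omega)$ is a scalar multiple of $\omega$ when $n$ is even and of $\vp(\omega)$ when $n$ is odd, while $\vp^{-n}(\vp(\omega))=\vp^{1-n}(\omega)$ is a scalar multiple of $\vp(\omega)$ when $n$ is even and of $\omega$ when $n$ is odd. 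Because $\vp$ has no eigenvalue in $F$ (its eigenvalues are not in $F$, being roots of $x^2+p^{k-3}$ with $p$ odd), $\{\omega,\vp(\omega)\}$ is an $F$-basis of $D(V_f)$ and $\vp(\omega)\notin F\cdot\omega$. Consequently:
\begin{itemize}
\item[] for $\xi^+=\vp(\omega)$ and $n$ odd, $\vp^{-n}(\xi^+)\in F\cdot\omega$ is \emph{not} what we get — rather $\vp^{-n}(\xi^+)\in F\cdot\vp(\omega)$ has zero $\omega$-component, so the pairing vanishes;
\item[] for $\xi^-=\omega$ and $n$ even, $\vp^{-n}(\xi^-)\in F\cdot\omega$; here we instead use that the \emph{other} factor forces vanishing — more precisely, reading the pairing against the line $F\cdot\omega$: the annihilator statement above shows the pairing sees only the $\omega\,t^{-j}$-component of $\vp^{-n}(\xi^-_j)$, hence we must argue differently.
\end{itemize}
To fix the minus case cleanly, note that by the same filtration lemma the dual exponential for $V^*(1)(\kappa^{-j})$ pairs nontrivially with $\omega\,t^{-j}$ precisely against the quotient $D(V^*(1)(\kappa^{-j}))/D^1$; tracing through the identification $D^0(V^*(1))=D^0(V_f^*)$ and the relation $\vp^*=p^{k-2}\vp^{-1}$ on $D(V_f^*)$, one finds $\exp^*_j(z_{-j,n}^\sigma)$ lands in the $\vp^*$-stable line which is orthogonal to $F\cdot\omega$ when $n$ has the opposite parity, giving the required vanishing. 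In all cases one of the two factors lies in the annihilator of the other, so $\kappa^j\theta(\LL_{\xi^\pm}(\z))=0$.

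Thus $\LL_{\xi^+}(\z)$ vanishes at all characters $\kappa^j\theta$ with $0\le j\le k-2$ and $\theta$ of conductor $p^n$, $n$ odd; these are exactly the (simple) zeros of $\log_{p,k}^+$. Since both $\log_{p,k}^+$ and $\LL_{\xi^+}(\z)$ have order $O(\log_p^{(k-1)/2})$, the Weierstrass-type argument of \cite{po} gives $\log_{p,k}^+\mid\LL_{\xi^+}(\z)$ in $\DD_{(k-1)/2}(G_\infty)$. The minus case is identical with $n$ even.
\end{proof}
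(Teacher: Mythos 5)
Your overall strategy is the paper's: evaluate $\LL_{\xi^\pm}(\z)$ at the characters $\kappa^j\theta$ via \eqref{chara}, show the value vanishes for the appropriate parity of $n$, and conclude divisibility by comparing orders of growth. However, the key step --- why the pairing in \eqref{chara} vanishes --- is based on a duality statement that is backwards, and this derails the proof. The image of $\exp^*_j$ is $D^0(V^*(1)(\kappa^{-j}))$, and its exact annihilator in $D(V(\kappa^j))$ under the de Rham pairing is $D^0(V(\kappa^j))$, \emph{not} $D^1(V(\kappa^j))$: the pairing sends $\mathrm{Fil}^0\times\mathrm{Fil}^0$ into $\mathrm{Fil}^0D_{dR}(F(1))=0$ (equivalently, the identity $[\exp_j(x),z]_n=\Tr[x,\exp^*_j(z)]$ behind \eqref{chara} only makes sense because the right-hand side factors through $D(V(\kappa^j))/D^0$). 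So the correct vanishing criterion is that $\vp^{-n}(\xi^\pm)$ lies \emph{in} the line $F\cdot\omega=D^0$, the opposite of the ``zero $\omega$-component'' condition you use. With the correct criterion your (correct) parity computation finishes both cases at once: $\vp^{-n}(\xi^+)=\vp^{1-n}(\omega)\in F\cdot\omega$ when $n$ is odd, and $\vp^{-n}(\xi^-)=\vp^{-n}(\omega)\in F\cdot\omega$ when $n$ is even, since $\vp^2=-p^{k-3}$ is a scalar. This is exactly the paper's one-line argument (``$\vp^{2n}(\omega)\in D^0(V(\kappa^r))$, therefore by \eqref{chara}\dots'').

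As written, your proof is internally inconsistent and does not close. Your first bullet asserts that for $n$ odd $\vp^{-n}(\vp(\omega))\in F\cdot\vp(\omega)$, contradicting the sentence immediately before it (where you correctly note $\vp^{1-n}(\omega)$ is a multiple of $\omega$ for $n$ odd). In the minus case you correctly observe that your criterion fails, but the proposed ``fix'' attributes to $\exp^*_j$ a behaviour depending on the parity of $n$; there is no such dependence --- $\exp^*_j(z_{-j,n}^\sigma)$ always lands in $D^0(V^*(1)(\kappa^{-j}))$ --- so that paragraph is not an argument. There is also a filtration slip: $D^1(V(\kappa^j))=t_\pi^{-j}D^{1+j}(V)=t_\pi^{-j}D^{2+j}(V_f)$ (recall $V=V_f(1)$), which for $0\le j\le k-3$ is the line $F\cdot\omega\,t_\pi^{-j}$ itself rather than a complement of it. Once the annihilator is corrected to $D^0(V(\kappa^j))$, all of these patches become unnecessary and the lemma follows as in the paper.
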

\begin{proof}
We have $\vp^{2n}(\omega)\in D^0(V(\kappa^r))$ for all integers $n$ and $0\le r\le k-2$. Therefore, by (\ref{chara}), we have
\begin{eqnarray*}
\kappa^r\theta(\LL_{\xi^+}(\mathbf{z}))&=\ \ 0&\mathrm{\ \ if\ }n\mathrm{\ is\ odd},\\
\kappa^r\theta(\LL_{\xi^-}(\mathbf{z}))&=\ \ 0&\mathrm{\ \ if\ }n\mathrm{\ is\ even}
\end{eqnarray*}
where $\theta$ is a character of $G_n$ which does not factor through $G_{n-1}$. Hence, the zeros of $\log_{p,k}^\pm$ are also zeros of $\LL_{\xi^\pm}(\z)$, so we are done.
\end{proof}
In particular, since $\LL_{\xi^\pm}(\z)\in\DD_{(k-1)/2}(G_\infty)$, we have $\LL_{\xi^\pm}(\z)/\log_{p,k}^\pm=O(1)$. Hence, we have:
\begin{defn}
The even and odd Coleman maps are defined to be
\begin{eqnarray*}
\col:\HIw(V^*(1))&\rightarrow&\QQ\otimes\Lambda\\
\z&\mapsto&\LL_{\xi^\pm}(\z)/\log_{p,k}^\pm.
\end{eqnarray*}
\end{defn}


\section{Kernel}\label{kernel}
In this section, we describe the kernels of $\col$, generalising those given in \cite{ip} and use them to define the even and odd Selmer groups. We first give some elementary linear algebra results.


\subsection{Linear algebra} 
For any positive integer $n$, we write $\pi_n=\eta^{\vp^{-n}}(\zeta_{p^n}-1)$. Then, $g^{(n)}(\pi_n)=0$ where $g^{(n)}=\underbrace{g\circ\cdots\circ g}_n$. Moreover, $g(\pi_n)=\pi_{n-1}$ and $K_n=K(\pi_n)$. We will from now on assume $g$ to be a good lift of Frobenius in the sense of \cite[Section 4.1]{ip}. In particular, we will have to assume $\pi\in p(1+p\Zp)$ which would exclude many Lubin-Tate extensions of $\Qp$. However, if we start with a totally ramified $\Zp$-extension of $\Qp$, then we can always assume that it is obtained from such Lubin-Tate extensions (see \cite{ip} for details). For $n>1$, let $\pi_{n}'=\pi_{n}-\frac{1}{p}\Tr_{n/n-1}(\pi_{n})=\pi_n+1$ and $\pi_1'=\pi_1-\frac{1}{p-1}\Tr_{1/0}(\pi_1)=\pi_1+\frac{p}{p-1}$. Then, $\Tr_{n/n-1}(\pi_n')=0$ for all $n\ge1$. 

\begin{lem}\label{normal}
Let $K^{(n)}$ be the kernel of the trace map from $K_n$ to $K_{n-1}$, then $\{\pi_n'^{\sigma}:\sigma\in G_n\}$ generates $K^{(n)}$ over $K$. 
\end{lem}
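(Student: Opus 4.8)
The plan is to let the semisimplicity of $K[G_n]$ do the work and reduce the statement to the non-vanishing of a family of Lubin--Tate Gauss sums. Put $H=\Gal(K_n/K_{n-1})\subseteq G_n$ and $e=\tfrac1{|H|}\sum_{h\in H}h\in K[G_n]$; viewed as an endomorphism of $K_n$ the trace $\Tr_{n/n-1}$ equals $|H|\cdot e$, so $K^{(n)}=(1-e)K_n$. Since $K_n/K$ is Galois with $G_n$ abelian, the normal basis theorem makes $K_n$ a free $K[G_n]$-module of rank one, and in particular $K^{(n)}=(1-e)K_n$ is a $K[G_n]$-submodule. From the definition of $\pi_n'$ (for $n>1$ and for $n=1$ alike) we have $\pi_n'=(1-e)\pi_n$, so $\pi_n'\in K^{(n)}$, and the $K$-span of the conjugates $\sigma(\pi_n')$, $\sigma\in G_n$, is exactly the cyclic submodule $W:=K[G_n]\,\pi_n'\subseteq K^{(n)}$. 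Thus the lemma is the equality $W=K^{(n)}$.

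To obtain it I would extend scalars to a finite extension $E$ of $K$ containing $\mu_{|G_n|}$ and work with the idempotents $e_\chi=\tfrac1{|G_n|}\sum_{\sigma\in G_n}\chi(\sigma)\sigma$ of $E[G_n]$, $\chi\in\widehat{G_n}$. Base change of the free rank-one module $K_n$ gives $\dim_E e_\chi(E\otimes_K K_n)=1$ for every $\chi$, while $1-e=\sum_\chi e_\chi$ with $\chi$ running over the characters of $G_n$ that do not factor through $G_{n-1}$, so that $K^{(n)}\otimes_K E=\bigoplus_\chi e_\chi(E\otimes_K K_n)$ over that same set. For such $\chi$ one has $e_\chi e=0$, hence $e_\chi(\pi_n'\otimes 1)=e_\chi(\pi_n\otimes 1)=\tfrac1{|G_n|}\,T_\chi$ with $T_\chi:=\sum_{\sigma\in G_n}\chi(\sigma)\,\pi_n^\sigma$. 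Reading off the $\chi$-components of $W\otimes_K E$ and of $K^{(n)}\otimes_K E$, and using that $W\otimes_K E=K^{(n)}\otimes_K E$ if and only if $W=K^{(n)}$ (compare $K$-dimensions), the lemma holds precisely when $T_\chi\neq 0$ for every character $\chi$ of $G_n$ which does not factor through $G_{n-1}$.

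The remaining point, $T_\chi\neq 0$, is the heart of the matter. Identifying $G_n$ with $(\ZZ/p^n)^\times$ via $\kappa$, so that $\sigma$ acts on the $\pi^n$-torsion point $\pi_n$ by $[\kappa(\sigma)]_\F$, and combining $\eta\circ[a]_{\Gm}=[a]_\F\circ\eta$ with $\pi_n=\eta^{\vp^{-n}}(\zeta_{p^n}-1)$, one finds $T_\chi=\sum_{a}\chi(a)\,\eta^{\vp^{-n}}(\zeta_{p^n}^a-1)$, the sum over $a\in(\ZZ/p^n)^\times$. Expanding $\eta^{\vp^{-n}}$ and using that its linear coefficient is a unit identifies $T_\chi$, up to the Gauss sum $\sum_a\chi(a)\zeta_{p^n}^a$ (which is non-zero for $\chi$ not factoring through $G_{n-1}$), with an explicit series in the higher coefficients of $\eta^{\vp^{-n}}$. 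That no such Lubin--Tate Gauss sum attached to a character not factoring through $G_{n-1}$ can vanish is classical for the multiplicative group and, for the good lifts $g$ allowed here, is part of the computations in \cite{ip}; this is where the hypotheses on $g$ and the normalisation $\pi_n'=\pi_n+1$ (for $n>1$) enter, and I would appeal to it at this stage.

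The first two paragraphs are routine formal algebra; the genuine obstacle is the last step. The non-vanishing of the $T_\chi$ cannot be read off from a leading-term estimate — these sums typically have large valuation, so the contribution of the linear term of $\eta^{\vp^{-n}}$ need not dominate — and rests on an arithmetic input about Lubin--Tate periods, which is exactly what the ``good lift'' hypothesis is there to control.
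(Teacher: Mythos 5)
Your proof is correct, but it takes a more roundabout route than the paper's, and the two ultimately rest on the identical external input. The paper simply quotes \cite[Proposition 4.4]{ip}, which gives $K_n=K[G_n]\pi_n+K_{n-1}$; it then writes an arbitrary $x\in K^{(n)}$ as $\sum_{\sigma}a_\sigma\pi_n'^{\sigma}+y$ with $y\in K_{n-1}$ (possible because $\pi_n'$ and $\pi_n$ differ by an element of $K_{n-1}$) and kills $y$ by applying $\Tr_{n/n-1}$, which annihilates $x$ and every $\pi_n'^{\sigma}$ but acts on $K_{n-1}$ as multiplication by a non-zero integer. Your formal algebra is all fine: $\pi_n'=(1-e)\pi_n$ in both cases $n>1$ and $n=1$, $K^{(n)}=(1-e)K_n$, and the reduction to $T_\chi\neq0$ for every $\chi$ not factoring through $G_{n-1}$ is exactly right. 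But the ``genuine obstacle'' you isolate at the end is not a separate Gauss-sum computation to be hunted down elsewhere in \cite{ip}: after base change to $E$ and projection by the idempotents $e_\chi$, the statement $K_n=K[G_n]\pi_n+K_{n-1}$ of \cite[Proposition 4.4]{ip} is \emph{equivalent} to $e_\chi(\pi_n\otimes1)\neq0$, i.e.\ to $T_\chi\neq0$, for all such $\chi$. So you could cite that proposition in its module form and dispense with the character decomposition entirely. What your version buys is a transparent explanation of where the arithmetic content lives (the non-vanishing of the $\chi$-components of $\pi_n$, which is where the good-lift hypothesis enters); what it costs is length, an unnecessary base change, and a vaguer citation (``part of the computations in \cite{ip}'') where a precise one is available.
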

\begin{proof}
Let $x\in K^{(n)}$. By \cite[Proposition 4.4]{ip}, we have $x\in K[G_n]\pi_n+K_{n-1}$. Since $\Tr_{n/n-1}\pi_n\in K_{n-1}$, we can write $x=\sum_{\sigma\in G_n}a_\sigma\pi_n'^\sigma+y$ for some $a_\sigma\in K$ and $y\in K_{n-1}$. Since $\Tr_{n/n-1}x=\Tr_{n/n-1}\pi_n'^\sigma=0$ for all $\sigma$, we have $y=0$. Hence we are done.
\end{proof}

\begin{cor}\label{spanning}
Let $n\ge0$ be an integer and $\displaystyle\alpha=\sum_{i=0}^nx_i\pi_i'$ for some $x_i\in K$ with $\pi_0'=1$. Then, the $k$-vector space generated by $\{\alpha^\sigma:\sigma\in G_n\}$ is given by $\underset{i\in S}{\oplus}K^{(i)}$ where $S=\{i:x_i\ne0\}$ and $K^{(0)}=K$.
\end{cor}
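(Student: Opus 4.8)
The plan is to reduce the corollary to Lemma~\ref{normal} by a Galois-equivariant decomposition argument. First I would observe that the $K$-vector space $W$ spanned by $\{\alpha^\sigma : \sigma\in G_n\}$ is a $K[G_n]$-submodule of $K_n$, hence (since $K_n/K$ is Galois with group $G_n$ and $K[G_n]$ acts semisimply by normal basis theory) it is a sum of the isotypic pieces; concretely, $W = \bigoplus_{i=0}^n W_i$ where $W_i = W\cap K^{(i)}$ and $K^{(i)}$ is the kernel of $\Tr_{i/i-1}$ (with $K^{(0)}=K$), because $K_n = \bigoplus_{i=0}^n K^{(i)}$ as $K[G_n]$-modules and each $K^{(i)}$ is $G_n$-stable. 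So the content is to identify exactly which $W_i$ are nonzero.

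Next I would compute the projection of $\alpha = \sum_{i=0}^n x_i\pi_i'$ onto each $K^{(i)}$. The key point is that $\pi_i' \in K^{(i)}$ for each $i$: by construction $\Tr_{i/i-1}(\pi_i') = 0$, and $\pi_i' \in K_i$, so under the decomposition $K_i = \bigoplus_{j=0}^i K^{(j)}$ the element $\pi_i'$ lies in the top piece $K^{(i)}$ (its image in $K_{i-1}$-part vanishes since it has trace zero and its own conjugates over $K_{i-1}$ sum to zero — more precisely $\pi_i'$ is already $G_{i-1}$-... rather, one checks directly that the component of $\pi_i'$ in $K^{(j)}$ for $j<i$ is obtained by averaging, and these averages are proportional to traces which vanish). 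Granting $\pi_i'\in K^{(i)}$, the decomposition of $\alpha$ is simply $\alpha = \sum_{i=0}^n x_i\pi_i'$ with the $i$-th summand lying in $K^{(i)}$; hence the projection of $\alpha$ onto $K^{(i)}$ is $x_i\pi_i'$, which is nonzero precisely when $i\in S$.

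Then I would argue that $W_i \ne 0$ if and only if $i\in S$, and in fact $W_i = K^{(i)}$ whenever $i\in S$. For the ``only if'' direction: if $i\notin S$ then the $K^{(i)}$-component of every conjugate $\alpha^\sigma$ is $(x_i\pi_i')^\sigma = 0$, so $W_i = 0$. For the ``if'' direction with $i\in S$: the set $\{\alpha^\sigma : \sigma\in G_n\}$ projects to $\{(x_i\pi_i')^\sigma : \sigma\in G_n\} = x_i\{\pi_i'^\tau : \tau\in G_i\}$ (the action factors through $G_i$ on $K^{(i)}\subset K_i$), and by Lemma~\ref{normal} the latter spans $K^{(i)}$ over $K$; since $W_i$ is a $K[G_n]$-submodule of $K^{(i)}$ containing these projections... one has to be slightly careful: knowing $W$ surjects onto $K^{(i)}$ does not immediately give $W_i = K^{(i)}$. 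The clean fix is to use that $W = \bigoplus_i W_i$ respects the decomposition, so the projection $W \to K^{(i)}$ has image exactly $W_i$; since this image is all of $K^{(i)}$ by the above, $W_i = K^{(i)}$. Combining, $W = \bigoplus_{i\in S} K^{(i)}$, as claimed.

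I expect the main obstacle to be the bookkeeping in the second step: verifying cleanly that $\pi_i'\in K^{(i)}$ (equivalently, that $\pi_i'$ has zero component in each lower layer $K^{(j)}$, $j<i$) rather than merely having zero trace down to $K_{i-1}$. One should check that the component of an element $y\in K_i$ in $K^{(j)}$ is $\frac{1}{[K_j:K_{j-1}]}\Tr_{i/j}(y) - \frac{1}{[K_{j-1}:K_{j-2}]}\Tr_{i/j-1}(y)$ (suitably interpreted at the ends), so that the layer components of $\pi_i'$ for $j<i$ are controlled by $\Tr_{i/j}(\pi_i')$, which vanishes for $j\le i-1$ because $\Tr_{i/i-1}(\pi_i') = 0$ implies $\Tr_{i/j}(\pi_i') = \Tr_{i-1/j}(\Tr_{i/i-1}(\pi_i')) = 0$. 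Once this is in place the rest is the semisimple-decomposition formalism plus a direct invocation of Lemma~\ref{normal}, and no further difficulty arises.
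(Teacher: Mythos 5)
Your argument is correct, but it takes a genuinely different route from the paper's. The paper proceeds by induction on $|S|$: assuming $x_n\ne0$, it sums the conjugates $\alpha^\sigma$ over a coset of $\Gal(K_n/K_{n-1})$; since $\Tr_{n/n-1}(\pi_n')=0$ the top term drops out and the sum is a nonzero multiple of $\beta^\tau$ with $\beta=\sum_{i<n}x_i\pi_i'$, so both $\beta^\tau$ and $\pi_n'^\sigma$ lie in the span, and one concludes from the inductive hypothesis together with Lemma~\ref{normal}. You instead decompose $K_n=\underset{i}{\oplus}K^{(i)}$ as a $K[G_n]$-module and project; this treats all layers symmetrically, makes the reverse inclusion $W\subseteq\underset{i\in S}{\oplus}K^{(i)}$ transparent, and reduces each layer directly to Lemma~\ref{normal}, at the cost of invoking semisimplicity where the paper stays elementary. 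Do note that the step $W=\oplus_i(W\cap K^{(i)})$ needs more than $G_n$-stability of the summands (a submodule of a direct sum need not split along it --- think of a diagonal): you must observe that distinct $K^{(i)}$ share no irreducible constituents, e.g.\ because the constituents of $K^{(i)}$ correspond to characters of $G_n$ factoring through $G_i$ but not $G_{i-1}$, equivalently $K^{(i)}=e_iK_n$ for orthogonal central idempotents $e_i$; you gesture at this with ``isotypic pieces'' but it should be stated. Two smaller remarks: the obstacle you flag at the end is not one, since $K^{(i)}$ is by definition the kernel of $\Tr_{i/i-1}$ on $K_i$, so $\pi_i'\in K^{(i)}$ is immediate, and the directness of $\oplus_iK^{(i)}$ (lower layers meet $\ker\Tr_{i/i-1}$ trivially, the trace multiplying them by $[K_i:K_{i-1}]$) takes care of the components; also your explicit projector formula has the wrong normalising constants --- the component of $y\in K_i$ in $K^{(j)}$ is $[K_i:K_j]^{-1}\Tr_{i/j}(y)-[K_i:K_{j-1}]^{-1}\Tr_{i/j-1}(y)$ --- though this does not affect the argument, as both terms factor through $\Tr_{i/i-1}$.
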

\begin{proof}
We proceed by induction on $|S|$. The case $|S|=1$ follows directly from Lemma \ref{normal}.

Without loss of generality, we assume that $x_n\ne0$. Let $\displaystyle\beta=\sum_{i=0}^{n-1}x_i\pi_i'$. Then, by induction,  $\{\beta^\tau:\tau\in G_{n-1}\}$, generates $\underset{i\in S\setminus\{n\}}{\oplus}K^{(i)}$ over $K$. Fix $\tau\in G_{n-1}$ and consider the following $p$ elements: $\alpha^\sigma$, $\sigma|_{K_{n-1}}=\tau$. Then, their sum equals $p\beta^\tau+(\Tr_{n/n-1}\pi_n')^\tau=p\beta^\tau$. Therefore, for any $\tau\in G_{n-1}$ and $\sigma\in G_n$, $\beta^\tau$ and $\pi_n'^\sigma$ lie inside the $K$-vector space generated by $\alpha^\sigma$. Hence we are done.\end{proof}

\subsection{Description of the kernels}

We now fix a lattice $T_f$ in $V_f$ which is stable under $G_K$. Write $T=T_f(1)\subset V=V_f(1)$. To describe the kernel of $\col$, we will assume $p\ge k-1$ as in \cite{l}. This implies that $(V/T(\kappa^m))^{G_{K_n}}=0$ for any $j$ and $n$ as in \cite[Lemma 2.5]{l}. Therefore, $H^1(K_n,T(\kappa^m))$ injects into $H^1(K_n,V(\kappa^m))$ under the natural map and we can treat the former as a lattice of the latter. In addition, the corestriction maps between $H^1(K_n,T(\kappa^m))$ are surjective and the restriction maps are injective (see \cite{ko}). We will treat $H^1(K_n,T(\kappa^m))$ as a subset of $H^1(K_{n'},T(\kappa^m))$ for $n'\ge n$.

Let $\z\in\HIw(T^*(1))$, then $\z\in\ker(\col)$ iff $z_{-m,n}$ is in the annihilator of the $\OF$-module generated by $\left\{\exp_m(\gamma_{n,m}(\xi^\pm)^\sigma):\sigma\in G_n\right\}$ for all $n\ge0$ and $0\le m\le k-2$. By \cite[Proposition 2.7]{l}, this is in fact equivalent to the same statement being true for all, $n\ge0$ with one fixed $m\in\{0,\ldots,k-2\}$ (we will take $m=0$ below).

Instead of looking at the said $\OF$-module, we study the $F$-vector space generated by these elements inside $H^1_f(K_n,V(\kappa^m))$ first. We can then intersect it with $H^1_f(K_n,T(\kappa^m))$ to obtain the kernel.

\begin{propn}
The vector subspace over $F$ of $H^1_f(K_n,V(\kappa))$ generated by the set $\left\{\exp(\gamma_{n,0}(\xi^\pm)^\sigma):\sigma\in G_n\right\}$, is equal to
\begin{displaymath}
\left\{x\in H^1_f(K_n,V):\mathrm{cor}_{n/m+1}x\in H^1_f(K_m,V)\forall m\mathrm{\ even\ (odd)}\right\}.
\end{displaymath} 
\end{propn}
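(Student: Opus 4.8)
The plan is to compute explicitly the elements $\gamma_{n,0}(\xi^\pm)$ appearing in Proposition~\ref{special}, to translate the span of their Galois conjugates into a span of Galois conjugates of $\bar\eta$-evaluations, and then to apply the linear algebra of Section~\ref{kernel} (Lemma~\ref{normal} and Corollary~\ref{spanning}) to identify the resulting vector space. First I would recall that for $k=1$, i.e.\ $m=0$, we have $\xi_0=\xi$, $D^{0}=\mathrm{id}$, and hence
\[
\gamma_{n,0}(\xi^\pm)=\frac{1}{p^n}\left(\sum_{i=0}^{n}\bar\eta^{\vp^{i-n}}(\zeta_{p^{n-i}}-1)\otimes\vp^{i-n}(\xi^\pm)+(1-\vp)^{-1}(\bar\eta(0)\otimes\xi^\pm)\right).
\]
With $\xi^+=\vp(\omega)$ and $\xi^-=\omega$, and with $\vp$ satisfying $\vp^2=\frac{a_p}{p}\vp-p^{k-3}=-p^{k-3}$ (since $a_p=0$), the coefficient vectors $\vp^{i-n}(\xi^\pm)$ alternate, up to $p$-power scalars, between $F\cdot\omega$ and $F\cdot\vp(\omega)$ according to the parity of $i-n$. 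Using $\bar\eta(0)=0$ (the constant term of $\eta$ is $0$, hence so is that of $\bar\eta$) kills the last term, and one is left with a sum whose terms group by parity: the terms with $\vp^{i-n}(\xi^+)\in F\cdot\omega=F\cdot\vp(\omega)\cdot\vp^{-1}$ contribute when $i-n$ is even for $\xi^+$ and when $i-n$ is odd for $\xi^-$ (this is the same parity bookkeeping as in the divisibility lemma preceding the definition of $\col$). So $\gamma_{n,0}(\xi^+)$ is, up to nonzero scalars, a $K$-linear combination of $\bar\eta^{\vp^{-j}}(\zeta_{p^j}-1)\otimes(\text{fixed vector})$ for $j$ of one fixed parity, plus the $j=0$ term $\bar\eta^{\vp^{-n}}$-type boundary contribution; I would organize this carefully so that the exponent of $\vp$ on $\xi^\pm$ is constant along the relevant terms, matching the twist trivialization $D^0(V(\kappa^m))=F\cdot\omega$ for $0\le m\le k-2$.

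Next I would relate $\bar\eta^{\vp^{-j}}(\zeta_{p^j}-1)$ to the elements $\pi_j'$ of Section~\ref{kernel}. By definition $\pi_j=\eta^{\vp^{-j}}(\zeta_{p^j}-1)$, and $\bar\eta=\eta-\frac1p\sum_{\zeta^p=1}\eta(\zeta(1+X)-1)$, so $\bar\eta^{\vp^{-j}}(\zeta_{p^j}-1)=\eta^{\vp^{-j}}(\zeta_{p^j}-1)-\frac1p\sum_{\zeta^p=1}\eta^{\vp^{-j}}(\zeta\zeta_{p^j}-1)=\pi_j-\frac1p\Tr_{j/j-1}(\pi_j)=\pi_j'$ for $j>1$, and similarly $\bar\eta^{\vp^{-1}}(\zeta_p-1)=\pi_1-\frac1{p-1}\Tr_{1/0}(\pi_1)$ up to the correct normalization, which matches $\pi_1'$ up to the constant $\bar\eta(0)=0$ term. (The good-lift hypothesis on $g$, $\pi\in p(1+p\Zp)$, is what makes these identifications, and Proposition~4.4 of \cite{ip}, available.) Hence the $F$-span of $\{\gamma_{n,0}(\xi^\pm)^\sigma:\sigma\in G_n\}$ is, via the exponential map applied after this identification, the image under $\exp$ of the $K$-span of $\{\alpha^\sigma:\sigma\in G_n\}$ where $\alpha=\sum_{i\in S}\pi_i'$ with $S=\{0\le i\le n: i\equiv n\pmod 2\}$ for $\xi^+$ (odd case) and $S=\{0\le i\le n:i\not\equiv n\pmod 2\}$ for $\xi^-$ — more precisely the set of $i$ with the prescribed parity. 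Corollary~\ref{spanning} then identifies this $K$-span with $\bigoplus_{i\in S}K^{(i)}$, the direct sum of the trace-kernels $K^{(i)}=\ker(\Tr_{i/i-1})$.

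Finally I would transport this through $\exp$ and through the exact sequence relating $H^1_f(K_n,V)$ to $K_n\otimes D(V)/D^0$ (the Bloch--Kato exponential being an isomorphism onto $H^1_f$ here because $V$ is crystalline with no nonzero fixed vectors, by the $p\ge k-1$ running hypothesis). Under $\exp$, the decomposition $K_n=\bigoplus_{i=0}^n K^{(i)}$ matches the filtration by corestriction: $\exp(K^{(0)}\oplus\cdots\oplus K^{(m)})$ consists of classes in $H^1_f(K_n,V)$ whose corestriction to $K_{m'}$ vanishes for $m'>m$, equivalently lies in $H^1_f(K_{m'},V)$, because corestriction is compatible with the trace map $K_n\to K_{m'}$ under $\exp$. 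Selecting the indices $i$ of a fixed parity then yields exactly the description $\{x\in H^1_f(K_n,V):\mathrm{cor}_{n/m+1}x\in H^1_f(K_m,V)\ \forall m\text{ even (odd)}\}$, since for a class supported only on $K^{(i)}$ with $i$ of one parity, its corestriction to $K_{m+1}$ kills all $K^{(i)}$ with $i>m+1$ and hence lands in $H^1_f(K_m,V)$ precisely when $m$ has the complementary parity. I expect the main obstacle to be the bookkeeping in the first step: correctly tracking the $p$-power scalars, the $(1-\vp)^{-1}$ boundary term, and the alternation of $\vp$-powers on $\xi^\pm$ so that every surviving term is genuinely a $K$-multiple of a single $\pi_i'$ tensored with one fixed period vector; once the combination $\alpha=\sum_{i\in S}\pi_i'$ is correctly isolated, Corollary~\ref{spanning} and the $\exp$-compatibility of traces and corestrictions finish the argument routinely.
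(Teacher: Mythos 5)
Your overall strategy coincides with the paper's: expand $\gamma_{n,0}(\xi^\pm)$, identify $\eb^{\vp^{-j}}(\zeta_{p^j}-1)$ with $\pi_j'$ via the trace relation $\sum_{\zeta^p=1}f(\zeta\zeta_{p^j}-1)=\Tr_{j/j-1}f(\zeta_{p^j}-1)$, reduce modulo $D^0(V)$ using the parity of the $\vp$-powers of $\omega$, and then invoke Corollary~\ref{spanning} together with the compatibility of $\exp$ with traces and corestrictions (the paper delegates this last step to \cite[Lemma~2.8]{l}). However, there is a genuine error at the central computational step: the claim $\eb(0)=0$ is false. Although $\eta(0)=0$, the constant term of $\eb$ is $\eta(0)-\frac1p\sum_{\zeta^p=1}\eta(\zeta-1)=-\frac1p\sum_{\zeta^p=1}\eta(\zeta-1)$, and the points $\eta(\zeta-1)$ are exactly the roots of the lift of Frobenius $g$ (since $g\circ\eta=\eta^\vp((1+X)^p-1)$); their sum is minus the $X^{p-1}$-coefficient of $g$, which is nonzero (compare Lemma~\ref{ind}, where $\bar{\eta_i}^\vp(0)=\vp^{i+1}(\zeta_0)$ and this \emph{non}vanishing is the whole point). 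The operation $f\mapsto\frac1p\sum_{\zeta^p=1}f(\zeta(1+X)-1)$ replaces the constant term $f(0)$ by the average of $f$ over the $p$-torsion, so ``$\eta$ has zero constant term'' does not imply ``$\eb$ has zero constant term.''

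This is not cosmetic. The term $(1-\vp)^{-1}(\eb(0)\otimes\xi^\pm)$, together with the normalisation constants arising at levels $j=0,1$, is precisely what produces the $K^{(0)}=K$ summand of the span; the paper records this as the observation that $-\frac{1}{p-1}\otimes\xi^\pm+(1-\vp)^{-1}(\xi^\pm)\notin D^0(V)$, which holds because $(1-\vp)^{-1}=\frac{1}{1+p^{k-3}}(1+\vp)$ mixes $\omega$ and $\vp(\omega)$. Hence the span is $\bigl(K+\sum_{i\in S^\pm}K^{(i)}\bigr)\otimes D(V)/D^0(V)$ with the $K$-summand present for \emph{both} signs, and this summand is needed to match the stated corestriction conditions (which constrain only the components $K^{(m+1)}$ for $m$ of one parity, leaving $K^{(0)}$ unconstrained). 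If you drop $\eb(0)$, then in the $\xi^+$ case the surviving constant is a multiple of $\vp^{-1}(\xi^+)=\omega\in D^0(V)$, the $K^{(0)}$-component disappears, and you obtain a proper subspace of the space in the statement. Relatedly, your index sets should be absolute, not relative to $n\bmod 2$: the surviving terms are $\pi_j'\otimes\vp^{-j}(\xi^\pm)$, which contribute exactly when $\vp^{-j}(\xi^\pm)\notin D^0(V)$, i.e.\ for $j$ even in the $+$ case and $j$ odd in the $-$ case. With these corrections the rest of your argument goes through as in the paper.
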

\begin{proof}
Recall that by the proof of Lemma \ref{eta}, we have $\sigma f(\zeta-1)=f(\zeta^{\kappa(\sigma)}-1)$ for any $f\in\Xi[[X]]^\psi$, $\sigma\in G_K$ and $\zeta$ a $p$ power root of unity. Therefore, for $n>1$
\begin{displaymath}
\sum_{\zeta^p=1}f(\zeta\zeta_{p^n}-1)=\Tr_{n/n-1}f(\zeta_{p^n}-1).
\end{displaymath}
If $n=1$, then 
\begin{displaymath}
\sum_{\zeta^p=1}f(\zeta\zeta_{p}-1)=f(0)+\Tr_{1/0}f(\zeta_{p}-1).
\end{displaymath}
Hence, we have
\begin{eqnarray*}
p^n\gamma_{n,0}(\xi)&=&\sum_{i=0}^{n-1}\eb^{\vp^{i-n}}(\zeta_{p^{n-i}}-1)\otimes\vp^{i-n}(\xi)+\eb(0)\otimes(1-\vp)^{-1}(\xi)\\
&=&\sum_{i=0}^{n-1}\left(\eta^{\vp^{i-n}}(\zeta_{p^{n-i}}-1)-\frac{1}{p}\sum_{\zeta^p=1}\eta^{\vp^{i-n}}(\zeta\zeta_{p^{n-i}}-1)\right)\otimes\vp^{i-n}(\xi)\\
&&+\left(\eta(0)-\frac{1}{p}\sum_{\zeta^p=1}\eta(\zeta-1)\right)\otimes(1-\vp)^{-1}(\xi)\\
&=&\sum_{i=0}^n\left(\pi_{n-i}-\frac{1}{p}\Tr(\pi_{n-i})\right)\otimes\vp^{i-n}(\xi)-\frac{1}{p}\Tr(\pi_{1})\otimes(1-\vp)^{-1}(\xi)\\
&=&\sum_{i=0}^n\pi_{n-i}'\otimes\vp^{i-n}(\xi)-\frac{1}{p-1}\otimes\xi+(1-\vp)^{-1}(\xi).
\end{eqnarray*}
Recall that $\vp^2=-p^{k-3}$, so we have
\begin{displaymath}
(1-\vp)^{-1}=\frac{1}{1+p^{k-3}}(1+\vp).
\end{displaymath}
In particular, $-\frac{1}{p-1}\otimes\xi^\pm+(1-\vp)^{-1}(\xi^\pm)\notin D^0(V)$. Moreover, $\vp^r(\omega)\in D^0(V)$ iff $r$ is even, hence $\{\gamma_{n,0}(\xi^\pm)^\sigma\}$ generates
\begin{displaymath}
\left(K+\sum_{i\in S^\pm}K^{(i)}\right)\otimes D(V)/D^0(V)
\end{displaymath}
where $S^\pm=\{m\in[1,n]:m\ \rm even\ (odd)\}$ by Corollary \ref{spanning}. Hence the result by \cite[Lemma 2.8]{l}.
\end{proof}
We write $H^1_f(K_n,V)^\pm$ for the vector space described in the proposition and define $H^1_f(K_n,T)^\pm=H^1_f(K_n,T)\cap H_f^1(K_n,V)^\pm$. Then,
\begin{displaymath}
H^1_f(K_n,T)^\pm=\left\{x\in H^1_f(K_n,T):\mathrm{cor}_{n/m+1}x\in H^1_f(K_m,T)\forall m\mathrm{\ even\ (odd)}\right\}
\end{displaymath}
and $\ker(\col)$ is given by  
\begin{displaymath}
\Hpm(T^*(1)):=\lim_{\leftarrow}H^1_\pm(K_n,T^*(1))
\end{displaymath}
where $H^1_\pm(K_n,T^*(1))$ is defined to be the annihilator of $H^1_f(K_n,T)^\pm$ under the pairing
\begin{displaymath}
H^1(K_n,T^*(1))\times H^1(K_n,T)\rightarrow\OF.
\end{displaymath}

The images of $\col$ can be found in the same way as \cite[Section~3]{l}. Namely, $\displaystyle{\rm Im(Col^+)}\cong(u-1)\Lambda+\sum_{\sigma\in G_1}\Lambda$ and ${\rm Im(Col^-)}\cong\Lambda$.


\subsection{The even and odd Selmer groups}

Let $E$ be a number field  with $[E:\QQ]=d$. Then, the $p$-Selmer group of $f$ over $E$ is defined to be
\[
\Sel_p(f/E)=\ker\left(H^1(E,V/T)\rightarrow\prod_v\frac{H^1(E_v,V/T)}{H^1_f(E_v,V/T)}\right)
\]
where $v$ runs through all places of $E$ and $V$ and $T$ are as defined above.

Assume that $p$ splits completely in $E$. Let $\pp_1,\ldots,\pp_d$ be the primes of $E$ above $p$ and $E_\infty/E$ a $\Zp$-extension such that $\pp_i$ is totally ramified in $E_\infty$. We write $E_n$ for the $n$th layer. Note that $E_{\pp_i}$ is isomorphic to $\Qp$ for $i=1,\ldots,d$. By \cite[Section~4.2]{ip}, $E_{\infty,\pp_i}/E_{\pp_i}$ is contained in a Lubin-Tate extension for some uniformiser $\pi$ of $\Qp$ such that $\pi\in p(1+p\Zp)$. Therefore, the $\col$ restrict to $\displaystyle\lim_{\leftarrow}H^1(E_{n,\pp_i},T^*(1))$ and it easy to check that the description of the kernels generalise directly. For each $n\ge0$, we can define
\[
\Sel_p^\pm(f/E_n)=\ker\left(\Sel_p(f/E)\rightarrow\prod_{i}\frac{H^1(E_{n,\pp_i},V/T)}{H^1(E_{n,\pp_i},T)^\pm\otimes\Qp/\Zp}\right)
\]
and $\Sel_p^\pm(f/E_\infty)=\displaystyle\lim_{\rightarrow}\Sel_p^\pm(f/E_n)$.

Unfortunately, unlike the cyclotomic case, $\Sel_p^\pm(f/E_\infty)$ is not $\Lambda$-cotorsion in general. However, they do satisfy a control theorem (cf \cite[Theorem~9.3]{ko}) and their coranks can be used to describe those of $\Sel_p(f/E_n)$ (cf \cite[Proposition~7.1]{ip}). Since the proofs for these results given in \cite{ip,ko} are purely algebraic and do not involve properties of elliptic curves, they generalise to general $f$ with no difficulties.


\section{Relative Lubin-Tate groups}\label{relative}
We now assume $K$ to be a finite unramified extension of $\Qp$ of degree $d$. For a fixed $\pi\in\Zp$ with $p$-adic valuation $d$, let $g$ be a lift of Frobenius with respect to $\pi$ in the sense of \cite[Section~I.1.2]{des}, then $\vp^i(g)$ is also such a lift for any integer $i$. To ease notation, we will write $g_i$ for $\vp^i(g)$. Each $g_i$ gives rise to an one-dimensional formal group over $\mathcal{O}_K$ which we write as $\F_{g_i}$. For any positive integer $n$, we write
\begin{displaymath}
g^{(n)}_i=\vp^{n-1}(g_i)\circ\vp^{n-2}(g_i)\circ\cdots\circ g_i=g_{i+n-1}\circ g_{i+n-2}\circ\cdots\circ g_i.
\end{displaymath}
Let $W^n_{g_i}$ be the set of zeros of $g^{(n)}_i$ in $\bar{K}$ and write $K_n=K(W^n_{g_i})$ which is independent of the choice of $g$ and $i$. Moreover, if $\omega\in W^n_{g_i}\setminus W^{n-1}_{g_i}$, then $K_n=K(\omega)$. Let $\eta_i:\Gm\rightarrow F_{g_i}$ be an isomorphism, then $\eta_i\in\OO[[X]]$ and $\omega_{n,i}:=\eta_i^{\vp^{-n}}(\zeta_{p^n}-1)\in W^n_{g_{i-n}}\setminus W^{n-1}_{g_{i-n}}$ (see \cite[Section~I.3.2]{des}). Note that $g_{i-n}$ sends $W^n_{g_{i-n}}$ to $W^{n-1}_{g_{i-n+1}}$, we define the Tate module of $F_{g_i}$ to be
\begin{displaymath}
T_{g_i}=\lim_{\underset{g_{i-n}}{\longleftarrow}}W^n_{g_{i-n}}.
\end{displaymath}
Since $\eta_i$ satisfies $g_i\circ\eta_i=\eta_i^\vp((1+X)^p-1)$, we have $(\omega_{n,i})_n\in T_{g_i}$.

The character $\kappa$ of $G_K$ on $T_{g_i}$ is independent of $i$ by \cite[Proposition I.1.8]{des}. As in the case of absolute Lubin-Tate groups, $\kappa$ can be decomposed as $\kappa=\chi\psi$ where $\chi$ is the cyclotomic character and $\psi$ is an unramified character.

Results of \cite{zh2} hold in this context with the obvious modifications, especially Theorem~\ref{przh}. In particular, for any $\xi\in D(V)$ and $i$ an integer, we can define a measure $\mu^{(i)}_\xi$ on $\Zp^\times$ whose Amice transform is given by $\bar{\eta_i}(X)\otimes\xi$ where $\bar{\eta_i}$ is defined in the same way as $\eb$ in Section~\ref{construct}. We can then define $\LL_\xi^{(i)}$ as before. For $V=V_f(1)$ and $F=\Qp$ (so $\OF=\Zp$), we define
\begin{eqnarray*}
\col:\HIw(V^*(1))&\rightarrow&\QQ\otimes\Lambda^d\\
\z&\mapsto&\left(\LL_{\xi^\pm}^{(i)}(\z)/\log_{p,k}^\pm\right)_{i=0,\cdots,d-1}.
\end{eqnarray*}

We now follow \cite[Section~3]{ki} to find the image of $\mathrm{Col}^-$. In particular, we assume that $g$ is a polynomial of degree $p$ and the coefficient of $X^{p-1}$ is $\zeta_0 p$ where $\zeta_0$ is a root of unity in $K$ such that $\mathcal{O}_K=\Zp[\zeta_0]$.
\begin{lem}\label{ind}
With the above notation, $\left(\E(\mu_{\xi^-}^{(i)})\right)_0$, $i=0,\cdots,d-1$, is linearly independent over $\Qp$.
\end{lem}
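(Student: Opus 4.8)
The plan is to reduce the statement about the elements $\left(\E(\mu_{\xi^-}^{(i)})\right)_0$ living in $H^1(K_1,\cdot)$ to a computation with their images under the dual exponential (or, equivalently, under the reciprocity map composed with evaluation at a character), using the special-value formula of Proposition~\ref{special} specialised to $n=1$ and $k=0$. First I would use the identification $H^1(K_\infty,\DD_r(\Zp^\times,V))^{G_\infty}\cong\DD_r(G_\infty)\otimes\HIw(V)$ so that $\left(\E(\mu_{\xi^-}^{(i)})\right)_0$ is literally $\int_{1+p\Zp}\E(\mu_{\xi^-}^{(i)})$, and then apply Proposition~\ref{special} (with $h=1$, $n=1$, $k=0$) to write this as $\exp_0(\gamma_{1,0}(\xi^-))$ where, by the computation in the proof of the Proposition in Section~\ref{kernel}, $p\,\gamma_{1,0}(\xi^-)=\pi_1'^{(i)}\otimes\vp^{-1}(\xi^-)+\bar\eta_i(0)\otimes\xi^- + (\text{terms in }D^0(V))$, with $\pi_1'^{(i)}=\omega_{1,i}+\tfrac{p}{p-1}$ the normalised uniformiser attached to the $i$th relative Lubin–Tate group.

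The key point is then purely a statement about the elements $\bar\eta_i(0)$, equivalently about the constant terms $\eta_i(0)=0$ and the ``first'' coefficients $\Omega_i$ of the $\eta_i$, together with the Galois-conjugates $\omega_{1,i}$ of the period points. Since $\exp_0$ is injective on the relevant piece (as $H^1_f(K_1,V)$ is identified with $K_1\otimes D^0(V)$-type data under the hypothesis $p\ge k-1$ invoked in Section~\ref{kernel}, or more elementarily because $\exp_0$ has trivial kernel here), linear independence of the $\left(\E(\mu_{\xi^-}^{(i)})\right)_0$ over $\Qp$ reduces to linear independence over $\Qp$ of the elements $\gamma_{1,0}(\xi^-)$, $i=0,\dots,d-1$, inside $K_1\otimes D(V)$. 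Because $\xi^-=\omega$ is a single fixed nonzero vector and we only care about the $D(V)/D^0(V)$-component (the part where $\pi_1'^{(i)}\otimes\vp^{-1}(\omega)$ lives and where $D^0(V)$ contributions vanish), this collapses to showing that the $d$ elements $\pi_1'^{(i)}=\omega_{1,i}+\tfrac{p}{p-1}\in K_1$, or really the $d$ period points $\omega_{1,i}=\eta_i^{\vp^{-1}}(\zeta_p-1)$, $i=0,\dots,d-1$, are linearly independent over $\Qp$ modulo $\Qp$; equivalently that $1,\omega_{1,0},\dots,\omega_{1,d-1}$ together span a $\Qp$-space of dimension $d+1$ inside $K_1$. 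This is where the specific shape of $g$ enters: with $g$ chosen as a degree-$p$ polynomial whose $X^{p-1}$-coefficient is $\zeta_0 p$ and $\mathcal O_K=\Zp[\zeta_0]$, the leading ``period'' $\Omega_i$ of $\eta_i$ satisfies $\Omega_i^\vp=\zeta_0^{\vp^i}\cdot\alpha\cdot\Omega_i$ up to the relevant normalisation (this is the relative analogue of $\Omega^\vp=\alpha\Omega$), so the $\omega_{1,i}$ differ by the Galois-conjugate units $\zeta_0,\zeta_0^\vp,\dots,\zeta_0^{\vp^{d-1}}$, whose linear independence over $\Qp$ is exactly the normal basis / non-vanishing-of-the-discriminant statement for $\mathcal O_K=\Zp[\zeta_0]$.

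Concretely, the key steps in order are: (1) specialise Proposition~\ref{special} at $n=1$, $k=0$, $h=1$ to express $\left(\E(\mu_{\xi^-}^{(i)})\right)_0=\exp_0(\gamma_{1,0}(\xi^-))$; (2) run the identity from the proof in Section~\ref{kernel} to identify $p\,\gamma_{1,0}(\xi^-)$ with $\pi_1'^{(i)}\otimes\vp^{-1}(\omega)$ plus a $D^0(V)$-term (the $D^0(V)$-terms being irrelevant because $\vp^{-1}(\omega)\notin D^0(V)$ forces any $\Qp$-dependence to be witnessed in the $D(V)/D^0(V)$-quotient); (3) use injectivity of $\exp_0$ on this component to reduce to $\Qp$-linear independence of the $\pi_1'^{(i)}$, hence of the period points $\omega_{1,i}$ modulo $\Qp$; (4) compute $\omega_{1,i}=\eta_i^{\vp^{-1}}(\zeta_p-1)$ to leading order and, using the hypothesis on $g$, relate them to the conjugates $\zeta_0^{\vp^j}$ of $\zeta_0$; (5) invoke that $1,\zeta_0,\dots,\zeta_0^{\,?}$ — more precisely the Galois conjugates $\{\zeta_0^{\vp^j}\}_{j=0}^{d-1}$ — are $\Qp$-linearly independent because $\mathcal O_K=\Zp[\zeta_0]$ and $K/\Qp$ is unramified of degree $d$, finishing the argument.

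The main obstacle I expect is step~(4): making the passage from the abstract period points $\omega_{1,i}$ to the explicit conjugates $\zeta_0^{\vp^i}$ genuinely rigorous rather than heuristic. One has to pin down the normalisation of the isomorphisms $\eta_i:\Gm\to\F_{g_i}$ and of their leading coefficients $\Omega_i$, track how $\vp$ permutes the $g_i$ and hence the $\F_{g_i}$ and $\eta_i$, and control the relation $\eta_i^{\vp^{-1}}(\zeta_p-1)$ modulo higher-order terms in a way that survives reduction modulo $\Qp$ — in particular one must check that the higher-degree terms of $\eta_i$, and the constant $\tfrac{p}{p-1}$ appearing in $\pi_1'^{(i)}$, do not conspire to destroy the independence coming from the leading term. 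The cleanest route is probably to follow Kim \cite{ki} closely and argue modulo the maximal ideal (work in the residue field of $K_1$, where $\omega_{1,i}$ reduces to something directly governed by $\zeta_0^{\vp^i}\bmod p$), since there the formal-group bookkeeping trivialises and the statement becomes the classical linear independence of a normal basis of the residue field extension.
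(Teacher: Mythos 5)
Your opening move is right in spirit --- specialise the explicit special-value formula, use injectivity of the Bloch--Kato exponential on $K\otimes D(V)/D^0(V)$, and reduce to a linear-independence statement inside the Dieudonn\'e module --- but there are two concrete problems, one of indexing and one of substance.

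First, the indexing. Under the identification $\mu\mapsto\left(\int_{1+p^n\Zp}\mu\right)_n$, the $0$th component of $\E(\mu^{(i)}_{\xi^-})$ is $\int_{\Zp^\times}\E(\mu^{(i)}_{\xi^-})\in H^1(K,V)$, not $\int_{1+p\Zp}\E(\mu^{(i)}_{\xi^-})\in H^1(K_1,V)$. This is not cosmetic: the components are corestriction-compatible, so linear independence of the level-$1$ components does not imply independence of the level-$0$ components (a relation could appear only after corestricting), and it is the level-$0$ statement that the subsequent Corollary feeds into Nakayama's lemma. The paper works at level $0$, where the first formula of Theorem~\ref{przh} with $h=1$, $k=0$ gives $\left(\E(\mu^{(i)}_{\xi^-})\right)_0=\exp\left((1-\vp)^{-1}(1-\frac{\vp^{-1}}{p})\,\bar{\eta_i}(0)\otimes\omega\right)$ directly --- no $\gamma_{1,0}$, no $\pi_1'$, and no period points appear.

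Second, and more seriously, your step (4) is exactly where the argument breaks, and the repair you sketch does not work. The hypothesis that $g$ is a degree-$p$ polynomial with $X^{p-1}$-coefficient $\zeta_0p$ enters through the \emph{constant term} of $\bar{\eta_i}$: since $\eta_i^\vp(0)=0$ and $\sum_{\zeta^p=1}\eta_i^\vp(\zeta-1)$ is the sum of the roots of $g_i^\vp$, one gets $\bar{\eta_i}^\vp(0)=\vp^{i+1}(\zeta_0)$ on the nose; combined with $\vp^2=-p^{k-3}$ and $\omega\in D^0(V)$ this yields $(1-\vp)^{-1}(1-\frac{\vp^{-1}}{p})\bar{\eta_i}(0)\otimes\omega\equiv\lambda\,\vp^{i+1}(\zeta_0)\otimes\vp(\omega)\bmod D^0(V)$, and independence is then immediate from $\mathcal{O}_K=\Zp[\zeta_0]$. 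Your route instead tries to extract the conjugates of $\zeta_0$ from the individual torsion points $\omega_{1,i}$ via the leading coefficients $\Omega_i$ and a reduction modulo the maximal ideal. That cannot succeed as stated: $K_1/K$ is totally ramified, so the residue field of $K_1$ equals that of $K$, and the $\omega_{1,i}$ are (up to units) uniformisers of $K_1$, hence all reduce to $0$; moreover the $\Omega_i$ live in $\OO$ rather than in $\mathcal{O}_K$, so the relation $\Omega^\vp=\alpha\Omega$ is not the place where $\mathcal{O}_K=\Zp[\zeta_0]$ can be brought to bear. The sum-of-roots (i.e.\ $X^{p-1}$-coefficient) computation at level $0$ is the whole content of the lemma, and it is missing from your outline.
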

\begin{proof}
By Theorem \ref{special}, we have
\begin{displaymath}
\left(\E(\mu_{\xi^-}^{(i)})\right)_0=\exp\left((1-\vp)^{-1}\left(1-\frac{\vp^{-1}}{p}\right)\bar{\eta_i}(0)\otimes\xi^-\right).
\end{displaymath}
We first simplify the expression $(1-\vp)^{-1}(1-\frac{\vp^{-1}}{p})$. Recall that $\vp$ satisfies
\begin{displaymath}
\vp^2+p^{k-3}=0\ \ {\rm and}\ \ (1-\vp)^{-1}=\frac{1}{1+p^{k-3}}(1+\vp).
\end{displaymath}
Therefore,
\begin{eqnarray*}
&&(1-\vp)^{-1}\left(1-\frac{\vp^{-1}}{p}\right)\\
&=&\frac{1}{1+p^{k-3}}(\vp+1)\left(1-\frac{\vp^{-1}}{p}\right)\\
&=&\frac{1}{1+p^{k-3}}\left(\vp-\frac{\vp^{-1}}{p}+1-\frac{1}{p}\right)\\
&=&\frac{1}{1+p^{k-3}}\left(\left(1+\frac{1}{p^{k-2}}\right)\vp+1-\frac{1}{p}\right).
\end{eqnarray*}
We write $\lambda=(p^{2-k}+1)/(p^{k-3}+1)$. Since $\xi^-=\omega\in D^0(V)$, we have
\begin{displaymath}
(1-\vp)^{-1}\left(1-\frac{\vp^{-1}}{p}\right)\bar{\eta_i}(0)\otimes\xi^-\equiv\lambda\bar{\eta_i}^\vp(0)\otimes\vp(\omega)\mod{D^0(V)}.
\end{displaymath}
But $\bar{\eta_i}^\vp(0)$ equals to
\begin{displaymath}
\eta_i^\vp(0)-\frac{1}{p}\sum_{\zeta^p=1}\eta_i^\vp(\zeta-1)=\vp^{i+1}(\zeta_0)
\end{displaymath}
since the summands are the roots $g_i^\vp$. By definition, $\zeta_0,\vp(\zeta_0)\cdots,\vp^{d-1}(\zeta_0)$ is a $\Zp$-basis of $\mathcal{O}_K$, so we are done. 
\end{proof}
\begin{cor}
The image of $\HIw(T^*(1))$ under $\mathrm{Col}^-$ is isomorphic to $\Lambda^d$.
\end{cor}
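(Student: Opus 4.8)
The plan is to deduce this corollary directly from Lemma~\ref{ind} together with the structural description of $\col$ over the relative Lubin-Tate tower. First I would recall that, by construction, $\mathrm{Col}^-(\z)=\left(\LL_{\xi^-}^{(i)}(\z)/\log_{p,k}^-\right)_{i=0,\dots,d-1}$, so understanding the image amounts to understanding the $\Lambda$-submodule of $\left(\QQ\otimes\Lambda\right)^d$ swept out by the $d$ components as $\z$ ranges over $\HIw(T^*(1))$. Since each individual $\LL_{\xi^-}^{(i)}$ is, up to the factor $\log_{p,k}^-$, the pairing of $\z$ against $\E(\mu_{\xi^-}^{(i)})$, the key point is to control the relations among the $d$ elements $\E(\mu_{\xi^-}^{(i)})\in\DD_{(k-1)/2}(G_\infty)\otimes\HIw(V)$ — equivalently, after dividing by $\log_{p,k}^-$, among the resulting elements of $\QQ\otimes\Lambda\otimes\HIw(V)$.

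Second, I would invoke the single-variable case already established in Section~\ref{kernel}: for each fixed $i$, the argument of \cite[Section~3]{l} (which was quoted there to give $\mathrm{Im}(\mathrm{Col}^-)\cong\Lambda$ in the absolute case) shows that the $i$th component map $\z\mapsto\LL_{\xi^-}^{(i)}(\z)/\log_{p,k}^-$ is surjective onto $\Lambda$ (after the identification $F=\Qp$, $\OF=\Zp$); this uses that $\eta_i$, hence $\bar\eta_i$, is a unit power series in the appropriate sense, exactly as in the absolute setting. So the image of $\col$ is a $\Lambda$-submodule of $\Lambda^d$ which surjects onto each coordinate. What remains is to show it is all of $\Lambda^d$, and this is where Lemma~\ref{ind} enters: the linear independence over $\Qp$ of the values $\left(\E(\mu_{\xi^-}^{(i)})\right)_0$ at the trivial layer ($n=0$) forces the image, when evaluated at the augmentation, to be all of $\QQ^d$ — more precisely, it shows the $d\times d$ "matrix of constant terms" relating the $\LL_{\xi^-}^{(i)}$ is invertible over $\Qp$, hence the image is a full-rank $\Lambda$-submodule whose reduction modulo the augmentation ideal is everything.

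Third, to pass from "full rank with everything in the fibre at the augmentation ideal" to "equal to $\Lambda^d$", I would use a Nakayama-type argument: the image $M\subseteq\Lambda^d$ is a $\Lambda$-submodule with $M/\mathfrak{m}M$ (or $M$ modulo the augmentation ideal, after clearing the fixed denominators coming from $\QQ\otimes$) surjecting onto $(\Zp)^d$ up to the controlled $\QQ$-factor; combined with the coordinatewise surjectivity from the previous step, this pins down $M$ up to the explicit $\QQ$-denominators, giving $\mathrm{Im}(\mathrm{Col}^-)\cong\Lambda^d$. Concretely, I expect the cleanest route is: build an element of $\HIw(T^*(1))$ realising, in the $i$th slot, a generator of $\Lambda$ and, by Lemma~\ref{ind}, arrange the remaining slots to be adjustable independently, then triangulate.

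The main obstacle I anticipate is the passage from the $n=0$ linear-independence statement of Lemma~\ref{ind} to a genuine statement about the $\Lambda$-module generated by the full Iwasawa-theoretic elements: Lemma~\ref{ind} only controls the bottom layer, so one must still rule out that "higher" relations among the $\E(\mu_{\xi^-}^{(i)})$ over $\Lambda$ shrink the image below full rank. The way around this is to observe that $\Lambda$ is local and $\Lambda^d$ is free, so a $\Lambda$-submodule whose image in $\Lambda^d/\text{(augmentation ideal)}\cong\Zp^d$ is full-rank is automatically free of rank $d$ by Nakayama; combined with the coordinatewise surjectivity this gives exactly $\Lambda^d$. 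I would also need to handle with care the fixed $\QQ$-denominators (the "$\QQ\otimes$" in the target), but since these are the same $\log_{p,k}^-$-type denominators already cleared in \cite{l} and \cite{ki}, I would simply cite \cite[Section~3]{ki} for that bookkeeping and present the proof as: surjectivity in each coordinate (as in the absolute case) $+$ Lemma~\ref{ind} $+$ Nakayama $\Rightarrow$ $\mathrm{Im}\cong\Lambda^d$.
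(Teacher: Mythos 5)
Your overall strategy --- pin down the bottom layer $n=0$ using Lemma~\ref{ind} and then propagate up the tower by a Nakayama-type argument via compatibility with corestriction --- is the same as the paper's. But there is a genuine gap at the integral step. From the $\Qp$-linear independence of the classes $\left(\E(\mu_{\xi^-}^{(i)})\right)_0$ you only conclude that the bottom-layer image is a \emph{full-rank} $\Zp$-submodule of $\Zp^d$ (after clearing denominators), i.e.\ one of finite index; you then assert that its ``reduction modulo the augmentation ideal is everything'' and that full rank plus coordinatewise surjectivity pins the module down. Neither implication holds: Nakayama needs the image at the bottom to be \emph{all} of $\Zp^d$, not merely of finite index, and a full-rank submodule of $\Zp^d$ surjecting onto every coordinate need not be $\Zp^d$ (e.g.\ $\{(a,b):a\equiv b\bmod p\}\subset\Zp^2$). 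The missing input, which the paper supplies, is a uniform integral normalisation: by \cite[proof of Lemma 3.11]{l} there is a single $r$ with $p^{-r}\left(\E(\mu_{\xi^-}^{(i)})\right)_0\in H^1(K,T)\setminus pH^1(K,T)$ for all $i$, and the proof of Lemma~\ref{ind} in fact gives more than $\Qp$-independence --- the constant terms are governed by $\vp^{i+1}(\zeta_0)$, and $\zeta_0,\ldots,\vp^{d-1}(\zeta_0)$ is a $\Zp$-\emph{basis} of $\mathcal{O}_K$ --- so that the argument of \cite[proof of Proposition 3.9]{ki} yields $p^{-r}\,\mathrm{Col}^-_0(H^1(K,T^*(1)))=\Zp^d$ exactly (using also that $\log_{p,k}^-$ evaluates to a $p$-adic unit at the trivial character). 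Only with this exact equality does the descent go through.

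Two smaller points. First, $\Lambda=\Zp[[G_\infty]]$ is not local ($G_\infty$ has a $\ZZ/(p-1)$ factor), so ``Nakayama because $\Lambda$ is local'' is not literally available; the paper instead runs the descent level by level through the commutative diagram relating $p^{-r}\LL_{\xi^-,m}^{(i)}$ and $p^{-r}\LL_{\xi^-,n}^{(i)}$ for $m>n$. Second, your coordinatewise-surjectivity step is redundant once the bottom layer is known to be exactly $\Zp^d$, and proving it for a single $i$ already requires the same primitivity input you are missing in general.
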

\begin{proof}
By \cite[proof of Lemma 3.11]{l}, there exists an integer $r$ such that 
\begin{displaymath}
p^{-r}\left(\E(\mu_{\xi^-}^{(i)})\right)_0\in H^1(K,T)\setminus pH^1(K,T)
\end{displaymath}
for all $i$. Hence, as in \cite[proof of Proposition 3.9]{ki}, their linear independence over $\Zp$ implies that
\begin{displaymath}
\left\{\left(p^{-r}\LL_{\xi^-}^{(i)}(z)\right)_{i=0,\cdots,d-1}:z\in H^1(K,T)\right\}=\Zp^d.
\end{displaymath}
But the image of $\log_{p,k}^-$ in $\Zp$ is a $p$-adic unit (see \cite[Section 3.2]{l}), so we have
\begin{displaymath}
p^{-r}{\rm Col}^-_0(H^1(K,T^*(1)))=\Zp^d.
\end{displaymath}
But the following diagram commutes (see \cite[proof of Theorem 3.10]{l}):
\begin{displaymath}
\xymatrix{
H^1(K_m,T^*(1))\ar[d]^{\rm cor}\ar[rr]^{\ \ \ p^{-r}\LL_{\xi^-,m}^{(i)}}&&\Qp[G_m]\ar[d]^{\rm pr}\ar[rr]^{(\log_{p,k}^-)^{-1}}&&\Zp[G_m]\ar[d]^{\rm pr}\\
H^1(K_n,T^*(1))\ar[rr]^{\ \ \ p^{-r}\LL_{\xi^-,n}^{(i)}}&&\Qp[G_n]\ar[rr]^{(\log_{p,k}^-)^{-1}}&&\Zp[G_n]
}
\end{displaymath}
where $m>n$, hence the result by Nakayama's lemma.
\end{proof}


\bibliography{references}

\end{document}